\numberwithin{equation}{section}
\newtheorem{theorem}{Theorem}[section] %
\newtheorem{lemma}[theorem]{Lemma} %
\newtheorem{remark}[theorem]{Remark} %
\begin{document}
\title{Asymptotic formulas \\ for general colored partition
functions}
\author{  Yong-Gao Chen\footnote{Corresponding author, ygchen@njnu.edu.cn(Y.-G. Chen)}, Ya-Li Li\\
\small  School of Mathematical Sciences and Institute of Mathematics, \\
\small  Nanjing Normal University,  Nanjing  210023,  P. R. China
}
\date{}
\maketitle \baselineskip 18pt

\begin{abstract} In 1917, Hardy and Ramanujan obtained the asymptotic formula for the classical partition
function $p(n)$. The classical partition function $p(n)$ has been
extensively studied. Recently, Luca and Ralaivaosaona obtained the
asymptotic formula for the square-root function. Many
mathematicians have paid much attention to congruences on some
special colored partition functions. In this paper, we investigate
the  general colored partition functions. Given positive integers
$1=s_1<s_2<\dots <s_k$ and $\ell_1, \ell_2,\dots , \ell_k$. Let
$g(\mathbf{s}, \mathbf{l}, n)$  be the number of
 $\ell$-colored partitions of $n$ with
$\ell_i$ of the colors appearing only in multiplies of $s_i\ (1\le
i\le k)$, where $\ell = \ell_1+\cdots +\ell_k$. By using the
elementary method we obtain  an asymptotic formula for the
partition function $g(\mathbf{s}, \mathbf{l}, n)$ with an explicit
error term.
\end{abstract}

{\bf Keyword:}  colored partition; partition function; asymptotic
formula; Gaussian integral

{\bf 2010 Mathematics Subject Classifications:} 11P82;11N37;05A17

\section{Introduction}

Let $p(n)$ denote the number of partitions of $n$, i.e.
$$n=a_1+\cdots+a_k$$with integers $1\le a_1\le \cdots \le
a_k.$  The generating function of $p(n)$ is
\begin{equation}\label{p(3)}f(z)=1+\sum_{n=1}^{\infty}p(n)z^n=\prod_{n=1}^\infty
\frac1{1-z^n}.\end{equation} Ramanujan \cite{Ramanujan} obtained
 many  congruent identity for $p(n)$. Hardy and
Ramanujan \cite{Hardy1917} and Uspensky \cite{Uspensky}
independently proved that\begin{equation}\label{p(1a)}p(n)\sim
\frac 1{4n\sqrt 3} \exp \left( \pi \sqrt{\frac{2n}3}\right).
\end{equation} An elementary proof for this formula is given
by Erd\H os \cite{Erdos} with no explicit constant $(4\sqrt
3)^{-1}$. Lehmer \cite{Lehmer} gave the series for the partition
function $p(n)$. Odlyzko \cite[(1.6)]{Odlyzko} gave an asymptotic
formula for $p(n)$ with an explicit error term. That is,
\begin{equation}\label{p(2a)}p(n)=\frac {1+O(n^{-1/2})}{4n\sqrt 3}
\exp \left( \pi \sqrt{\frac{2n}3}\right).
\end{equation}

The partition function $p(n)$ has a long history and generates
varieties.

Recently, Luca and Ralaivaosaona \cite{Luca2016} obtained the
asymptotic formula for the square-root function $q(n)$, which is
defined to be the number of solutions of
$$ n=[\sqrt{a_1}]+[\sqrt{a_2}]+\cdots +[\sqrt{a_k}]$$
with integers $1\le a_1\le a_2\le \cdots \le a_k$. For related
results, one may refer to Balasubramanian and Luca \cite{Balu} and
Chen and Li \cite{ChenCR} and \cite{ChenTaiwan}.

Let $p_k(n)$ be the number of 2-color partitions of $n$ where one
of the colors appears only in parts that are multiples of $k$. The
generating function of $p_k(n)$ is
$$1+\sum_{n=1}^\infty p_k(n)z^n =\prod_{n=1}^\infty
\frac1{(1-z^n)(1-z^{kn})}.$$
 Chan \cite{Chan1}, Kim \cite{Kim} and Sinick
\cite{Sinick} studied some results of the case $k=2$. Recently,
Ahmed, Baruah and Dastidar \cite {Ahmed} and Chern \cite{Chern}
obtained many congruences of $p_k(n)$ for some $k$.

By further analogy, Chan and Cooper \cite{Chan and Cooper} and
Chen \cite{Chen} considered a special partition function $c(n)$
which is the number of 4-colored partitions of $n$ with two of the
colors appearing only in multiplies of 3. The generating function
of $c(n)$  is
\begin{eqnarray*}1+\sum_{n=1}^\infty c(n)z^n
=\prod_{n=1}^\infty \frac1{(1-z^n)^2(1-z^{3n})^2} .
\end{eqnarray*}

In this paper, we focus on  the asymptotic formula for the general
colored partition functions.

Given integers $1=s_1<s_2<\dots <s_k$ and $\ell_1, \ell_2,\dots ,
\ell_k$. Let $g(\mathbf{s}, \mathbf{l},n)$ be the number of
$\ell$-colored partitions of $n$ with $\ell_i$ of the colors
appearing only in multiplies of $s_i\ (1\le i\le k)$, where $\ell
=\ell_1+\cdots +\ell_k$. Write
$$\mathbf{s}=(s_1,s_2,\dots , s_k)$$
and $$ \mathbf{l} =(\ell_1, \ell_2,\dots , \ell_k).$$We call
$g(\mathbf{s}, \mathbf{l}, n)$ the $(\mathbf{s},
\mathbf{l})$-colored partition function. For convenience, we
define $g(\mathbf{s}, \mathbf{l}, 0)=1$ and $g(\mathbf{s},
\mathbf{l}, n)=0$ for all $n<0$. The generating function of
$g(\mathbf{s}, \mathbf{l}, n)$  is
\begin{eqnarray}\label{g}\sum_{n=0}^\infty g(\mathbf{s},
\mathbf{l}, n) x^n=\prod_{n=1}^\infty \frac 1{
(1-z^{s_1n})^{\ell_1} \cdots (1-z^{s_kn})^{\ell_k}}.\end{eqnarray}

In this paper, the following result is proved.

\begin{theorem} \label{thm0} For any given $\varepsilon >0$
(small), we have
$$g(\mathbf{s}, \mathbf{l }, n)=c(\mathbf{s}, \mathbf{l}) n^{d(\mathbf{l} )}
\exp \left(\pi \sqrt{\frac{2a (\mathbf{s}, \mathbf{l})
n}3}\right)+O\left(n^{d(\mathbf{l} )-\frac 14 +\varepsilon }\exp
\left(\pi \sqrt{\frac{2a (\mathbf{s}, \mathbf{l}) n}3}\right)
\right) ,$$ where
$$a (\mathbf{s}, \mathbf{l})=\sum_{i=1}^k\frac{\ell_i}{s_i},\quad d(\mathbf{l} )= -\frac 34 -\frac 14 (\ell_1+\cdots
+\ell_k),$$
$$c(\mathbf{s}, \mathbf{l}) = 2^{-(3\ell_1+\cdots
+3\ell_k+5)/4} 3^{-(\ell_1+\cdots +\ell_k+1)/4} a(\mathbf{s},
\mathbf{l})^{(\ell_1+\cdots +\ell_k+1)/4} s_1^{\ell_1/2}\cdots
s_k^{\ell_k /2}.$$
\end{theorem}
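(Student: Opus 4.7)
My plan is to apply the saddle-point method to the Cauchy integral representation
$$g(\mathbf{s},\mathbf{l},n)=\frac{1}{2\pi i}\oint_{|z|=r}F(z)\,z^{-n-1}\,dz,\qquad F(z):=\prod_{i=1}^{k}\prod_{m=1}^{\infty}(1-z^{s_im})^{-\ell_i},$$
along a circle $|z|=r=e^{-t_0}$ where $t_0$ is the saddle of $F(e^{-t})e^{nt}$. The first step is to establish the small-$t$ asymptotic
$$\phi(t):=\log F(e^{-t})=\frac{\pi^{2}a(\mathbf{s},\mathbf{l})}{6t}+\frac{1}{2}\sum_{i=1}^{k}\ell_{i}\log\frac{s_{i}t}{2\pi}-\frac{t}{24}\sum_{i=1}^{k}\ell_{i}s_{i}+O(e^{-c/t}),$$
which follows factor by factor from the classical identity $-\sum_{m\ge 1}\log(1-e^{-m\alpha})=\pi^{2}/(6\alpha)+\tfrac{1}{2}\log(\alpha/(2\pi))-\alpha/24+O(e^{-c/\alpha})$ (a Mellin-transform computation, or the functional equation of $\eta$). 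The quantity $a(\mathbf{s},\mathbf{l})=\sum_{i}\ell_{i}/s_{i}$ arises as the total coefficient of $1/t$, which fixes the shape of the exponential in the final answer.

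Next I would locate the saddle by solving $\phi'(t_{0})+n=0$; the leading balance gives $t_{0}=\pi\sqrt{a(\mathbf{s},\mathbf{l})/(6n)}\bigl(1+O(n^{-1/2})\bigr)$, and using $c_{-1}/t_{0}=nt_{0}$ directly yields
$$\phi(t_{0})+nt_{0}=\pi\sqrt{\frac{2a(\mathbf{s},\mathbf{l})n}{3}}+\frac{L}{4}\log\frac{a(\mathbf{s},\mathbf{l})}{6n}+\frac{L}{2}\log\pi-\frac{L}{2}\log(2\pi)+\frac{1}{2}\sum_{i}\ell_{i}\log s_{i}+O(n^{-1/2}),$$
with $L=\ell_{1}+\cdots+\ell_{k}$. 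On the major arc $|\theta|\le\theta_{0}$ of the circle $|z|=r$ the expansion
$$\log F(re^{i\theta})-n\log(re^{i\theta})=\phi(t_{0})+nt_{0}-\frac{\theta^{2}}{2}\phi''(t_{0})+O\bigl(|\theta|^{3}\phi'''(t_{0})\bigr)$$
has its linear $\theta$-term killed by the saddle condition, so the Gaussian integral $\int e^{-\theta^{2}\phi''(t_{0})/2}d\theta\sim\sqrt{2\pi/\phi''(t_{0})}$, combined with $\phi''(t_{0})\sim\pi^{2}a/(3t_{0}^{3})$, produces the factor $\pi a^{1/4}\,6^{-1/4}n^{-3/4}$. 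Multiplying the exponential, the polynomial correction from $\phi(t_{0})+nt_{0}$, and this Gaussian factor divided by $2\pi$, and simplifying powers of $2,3,\pi$, produces exactly $c(\mathbf{s},\mathbf{l})\,n^{-(L+3)/4}\exp(\pi\sqrt{2a(\mathbf{s},\mathbf{l})n/3})$, matching the stated constants.

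For the minor arc $\theta_{0}\le|\theta|\le\pi$ I would show $|F(re^{i\theta})|\le F(r)\exp(-n^{\eta})$ for some $\eta>0$; it suffices to isolate the factor $(1-re^{i\theta})^{-\ell_{1}}$ coming from $s_{1}=1$, where the modulus drops by $\exp(-c\theta_{0}^{2}/t_{0})$ compared with $\theta=0$, and to bound the remaining factors crudely using $|1-z|\ge 1-|z|$. The main obstacle will be calibrating $\theta_{0}$ (a small power of $n$ times $t_{0}$ is the natural choice) so that three conditions are met simultaneously: the cubic remainder $\theta_{0}^{3}\phi'''(t_{0})=O(\theta_{0}^{3}t_{0}^{-4})$ on the major arc is absorbed into the claimed $n^{-1/4+\varepsilon}$ relative error; the next-to-leading correction to $t_{0}$ from the $\log t$ term in $\phi$ is tracked with enough precision; and the minor-arc saving is still strong enough after losing the crude product bound. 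Achieving the precise exponent $-1/4+\varepsilon$ requires carrying one extra order in every asymptotic, which is where the elementary bookkeeping becomes delicate.
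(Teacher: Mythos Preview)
Your approach is the classical saddle-point/circle method applied directly to the generating function, and it can be made to work, but it is \emph{not} what the paper does.  The paper's argument is purely real-variable and elementary: it writes
\[
g(n)=\sum_{\mathbf{u}\in U_n}\prod_{i,j}p(u_{i,j}),
\qquad U_n=\Bigl\{\mathbf{u}:\sum_{i,j}s_i u_{i,j}=n\Bigr\},
\]
inserts the known asymptotic $p(m)=\bigl(c_2+O(m^{-1/2})\bigr)m^{-1}\exp(c_1\sqrt{m})$, locates by Cauchy--Schwarz the lattice point $v_{i,j}=n/(s_i^2 a)$ at which $\sum\sqrt{u_{i,j}}$ is maximal, splits $U_n$ into a box $U_n'$ of side $\asymp n^{\eta}$ around $\mathbf{v}$ and its complement $U_n''$, bounds the tail $U_n''$ by a real-variable Taylor expansion of $f(\alpha)=(1+\alpha)^{1/2}+(s_ka-1)(1-(s_ka-1)^{-1}\alpha)^{1/2}$, and on $U_n'$ replaces the lattice sum by a Gaussian integral in $\ell_1+\cdots+\ell_k-1$ variables whose determinant is computed explicitly.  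In effect the paper outsources all of the complex-analytic work (your minor arc) to the single input \eqref{p(2a)} for $p(n)$, and the remaining analysis is a discrete Laplace method on a finite-dimensional simplex.

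Your route is more self-contained and would generalize more readily to products not built from $\eta$-factors, but it requires you to supply the minor-arc estimate yourself, and your sketch of that step is not right as written: isolating the single factor $(1-re^{i\theta})^{-\ell_1}$ and bounding the rest by $|1-z|\ge 1-|z|$ gives only a \emph{polynomial} gain $(t_0/|t_0-i\theta|)^{\ell_1}$, nowhere near the $\exp(-n^{\eta})$ you need against a main term of size $\exp(c\sqrt{n})$.  The exponential decay comes from the \emph{full} product $\prod_{m\ge 1}(1-r^m e^{im\theta})^{-\ell_1}$, typically via the modular transformation of $\eta$ (equivalently, the analytic continuation of your expansion of $\phi$ to complex $t$ with $\Re t>0$) together with a Farey dissection near each rational $\theta/2\pi$.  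Once you replace ``the factor $(1-re^{i\theta})^{-\ell_1}$'' by ``the $\ell_1$-th power of the Dedekind $\eta$ factor attached to $s_1=1$'' and invoke its functional equation, the rest of your plan goes through and yields the same constants and the same $n^{-1/4+\varepsilon}$ relative error.
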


\begin{remark}
By employing the Tauberian theorem of Ingham \cite{Ingham}, under
its form in the ¡°special case¡±, it is possible to get an
asymptotic formula for $g(\mathbf{s}, \mathbf{l }, n)$ without the
error term. We do not intend to give the details here.

Let $k=2,s_1=1, s_2=3$ and $\ell_1=\ell_2=2$. By
 Theorem \ref{thm0}, we can give an asymptotic formula for
$c(n)$, that is,
$$c(n)= \frac{1}{3 \sqrt 6 n^{7/4}} \exp \left( \frac 43 \pi  \sqrt n\right)
+O\left(n^{-2+\varepsilon}\exp \left( \frac 43 \pi  \sqrt
n\right)\right).$$

We believe that there are many congruences for the $(\mathbf{s},
\mathbf{l})$-colored partition functions as many known various
partition functions.
\end{remark}

\section{The main ingredients}

For convenience, let
\begin{equation}\label{acc}a=a (\mathbf{s}, \mathbf{l})=\sum_{i=1}^k\frac{\ell_i}{s_i}, \quad g(n)=g(\mathbf{s}, \mathbf{l }, n),
\quad c_1=\pi \sqrt{\frac{2}3},\quad c_2=\frac 1{4\sqrt
3}.\end{equation} Then \eqref{p(2a)} becomes
\begin{equation}\label{p(1)}p(n)= \frac {c_2+O(n^{-1/2})}{n} \exp
\left( c_1 \sqrt{n}\right).
\end{equation}
If $k=1$ and $\ell_1=1$, then $g(n)=p(n)$. In this case, Theorem
\ref{thm0} follows from \eqref{p(1)}. Now we assume that
$k+\ell_1\ge 3$. So $as_k>1$.

\begin{lemma}\label{lem1}We have
$$g(n)=\sum_{\mathbf{u}\in U_n}
\prod_{\substack{1\le i\le k\\ 1\le j\le \ell_i}}  p(u_{i,j}),$$
where $U_n$ is the set of all $\ell_1+\cdots +\ell_k$ tuples
$$\mathbf{u}= (u_{1,1} , \dots , u_{1, \ell_1} , \dots , u_{k,1}
, \dots , u_{k, \ell_k}) $$ of nonnegative integers with
$$s_1u_{1,1}+\cdots +s_1 u_{1, \ell_1} +\cdots +s_ku_{k,1}+\cdots
+s_k u_{k, \ell_k} =n.$$ \end{lemma}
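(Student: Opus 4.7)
The plan is to read off Lemma \ref{lem1} directly from the generating-function identity \eqref{g} by rewriting the product in a convenient form and then extracting coefficients. The only input beyond \eqref{g} is the classical identity \eqref{p(3)}, namely $\prod_{n=1}^{\infty}(1-z^{n})^{-1}=\sum_{m=0}^{\infty}p(m)z^{m}$.

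First I would observe that the $\ell_i$-th power in \eqref{g} is purely combinatorial: writing $(1-z^{s_i n})^{-\ell_i}$ as a product of $\ell_i$ identical factors indexed by $j=1,\dots,\ell_i$, and then interchanging the two products over $n$ and over $(i,j)$, the generating function becomes
$$
\sum_{n=0}^{\infty}g(\mathbf{s},\mathbf{l},n)z^{n}
=\prod_{i=1}^{k}\prod_{j=1}^{\ell_i}\ \prod_{n=1}^{\infty}\frac{1}{1-z^{s_i n}}.
$$
Next, for each fixed $(i,j)$ I would recognize the inner product as $f(z^{s_i})$, where $f$ is the generating function of $p(\cdot)$ from \eqref{p(3)}, so that
$$
\prod_{n=1}^{\infty}\frac{1}{1-z^{s_i n}}
=\sum_{u=0}^{\infty}p(u)z^{s_i u}.
$$

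Substituting this expression into the previous display yields
$$
\sum_{n=0}^{\infty}g(\mathbf{s},\mathbf{l},n)z^{n}
=\prod_{i=1}^{k}\prod_{j=1}^{\ell_i}\sum_{u_{i,j}=0}^{\infty}p(u_{i,j})z^{s_i u_{i,j}}.
$$
Expanding the right-hand side and collecting the coefficient of $z^{n}$ produces exactly the sum claimed in Lemma \ref{lem1}: each term corresponds to a choice of nonnegative integers $u_{i,j}$ with $\sum_{i,j}s_i u_{i,j}=n$, contributing $\prod_{i,j}p(u_{i,j})$.

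There is essentially no obstacle: the proof is a formal manipulation of absolutely convergent power series for $|z|<1$, followed by equating coefficients of $z^{n}$ on both sides. The only point requiring a line of care is justifying the interchange of the infinite products over $n$ and over $(i,j)$, which is standard because each factor is of the form $1+O(z)$ and the products converge absolutely on compact subsets of $|z|<1$.
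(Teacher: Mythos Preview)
Your proof is correct and follows essentially the same route as the paper: both arguments start from \eqref{g}, replace each factor $\prod_{n\ge1}(1-z^{s_in})^{-1}$ by $\sum_{u\ge0}p(u)z^{s_iu}$ via \eqref{p(3)}, expand the resulting finite product of power series, and equate coefficients of $z^n$. The paper's version is terser and omits the remarks on absolute convergence, but the substance is identical.
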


\begin{proof}By \eqref{p(3)} and \eqref{g}, we have\begin{eqnarray*}
\sum_{n=0}^\infty g(n)z^n&=&\Big( \sum_{n=0}^\infty
p(n)z^{s_1n}\Big)^{\ell_1}\cdots\Big(\sum_{n=0}^\infty p(n)z^{s_kn}\Big)^{\ell_k}\\
&=&\sum_{u_{1,1}=0}^\infty p(u_{1,1})z^{s_1u_{1,1}}\cdots\sum_{u_{1,\ell_1}=0}^\infty p(u_{1,\ell_1})z^{s_1u_{1,\ell_1}}\cdots\\
&&\sum_{u_{k,1}=0}^\infty
p(u_{k,1})z^{s_ku_{k,1}}\cdots\sum_{u_{k,\ell_k}=0}^\infty
p(u_{k,\ell_k})z^{s_ku_{k,\ell_k}}
\\&=&\sum_{n=0}^\infty \Big(\sum_{\mathbf{u}\in U_n}
\prod_{\substack{1\le i\le k\\ 1\le j\le \ell_i}} p(u_{i,j})\Big)
z^{n}.
\end{eqnarray*}
Now Lemma \ref{lem1} follows immediately.
\end{proof}

\medspace

We will divide $U_n$ into two parts $U_n'$ and $U_n''$ which will
be given later such that $u_{i,j}\to +\infty$ as $n\to +\infty$
for any $\mathbf{u}= (u_{1,1} , \dots , u_{1, \ell_1} , \dots ,
u_{k,1} , \dots , u_{k, \ell_k})\in U_n'$. By Lemma \ref{lem1}, we
have
\begin{equation}\label{eqg(n)} g(n)=\sum_{\mathbf{u}\in U_n'}\prod_{\substack{1\le i\le k\\ 1\le j\le \ell_i}}
p(u_{i,j})+\sum_{\mathbf{u}\in U_n''}\prod_{\substack{1\le i\le k\\
1\le j\le \ell_i}} p(u_{i,j}).\end{equation}

We expect that $U_n'$ contributes to $g(n)$  the main term and
$U_n''$ contributes to $g(n)$ the remainder term.

 By \eqref{p(1)}, we have
 \begin{eqnarray}&&\sum_{\mathbf{u}\in U_n'}\prod_{\substack{1\le i\le k\\ 1\le j\le \ell_i}}
p(u_{i,j})\nonumber \\
&=&\nonumber\sum_{\mathbf{u}\in
U_n'}\prod_{\substack{1\le i\le k\\ 1\le j\le \ell_i}}\frac
{c_2+O(u_{i,j}^{-1/2})}{u_{i,j}} \exp
\left(c_1\sqrt{u_{i,j}}\right) \nonumber \\
&=&\label{eqw1}\sum_{\mathbf{u}\in U_n'} \frac{c_2^{\ell_1+\cdots
+\ell_k}+O\big(\sum u_{i,j}^{-1/2}\big)}{\prod u_{i,j}}\exp \Big(
c_1 \sum_{i,j} \sqrt{u_{i,j}}\Big) \end{eqnarray} and
\begin{eqnarray}\label{eqw2}\sum_{\mathbf{u}\in U_n''}\prod_{\substack{1\le i\le k\\ 1\le j\le \ell_i}}
p(u_{i,j}) \ll \sum_{\mathbf{u}\in U_n''} \exp \Big( c_1
\sum_{\substack{1\le i\le k\\ 1\le j\le \ell_i}}
\sqrt{u_{i,j}}\Big).\end{eqnarray}

Since the function $\exp (x)$ increases rapidly, it infers from
\eqref{eqw1} and \eqref{eqw2} that the maximal value of
$\sum\limits_{i,j}\sqrt{u_{i,j}}$ with $ \mathbf{u}\in U_n$ gives
the main contribution to $g(n)$. Now we find the maximal value of
$\sum\limits_{i,j}\sqrt{u_{i,j}}$ with $ \mathbf{u}\in U_n$.  By
the Cauchy-Schwarz inequality, for any $\mathbf{u}\in U_n$, we
have
\begin{eqnarray*}\sum_{\substack{1\le i\le k\\ 1\le j\le \ell_i}} \sqrt{u_{i,j}}
&=& \sum_{\substack{1\le i\le k\\ 1\le j\le \ell_i}}\frac
1{\sqrt{s_i}}
\sqrt{s_iu_{i,j}}\\
&\le &\Big(\sum_{\substack{1\le i\le k\\ 1\le j\le \ell_i}}\frac
1{s_i}\Big)^{1/2}
\Big(\sum_{\substack{1\le i\le k\\ 1\le j\le \ell_i}}s_iu_{i,j}\Big)^{1/2}\\
&=&\Big(\sum_{i=1}^k\frac{\ell_i}{s_i}\Big)^{1/2}
\sqrt{n},\end{eqnarray*} where the equality holds if and only if
$$u_{i,j}=\frac{n}{s_i^2 a}, \quad 1\le i\le k, 1\le j\le \ell_i
,$$ where
$$a=\sum_{i=1}^k\frac{\ell_i}{s_i}.$$
Let
$$v_{i,j}=\frac{n}{s_i^2 a}, \quad 1\le i\le k, 1\le j\le \ell_i
.$$ It is clear that
$$\sum_{i,j} s_iv_{i,j} =\sum_{i=1}^k \frac{\ell_i n}{s_i a} =n.$$
Now we have proved that the maximal value of
$\sum\limits_{i,j}\sqrt{u_{i,j}}$ is obtained if and only if
$$\mathbf{u}= (u_{1,1} , \dots , u_{1, \ell_1} , \dots , u_{k,1} ,
\dots , u_{k, \ell_k})=(v_{1,1} , \dots , v_{1, \ell_1} , \dots ,
v_{k,1} , \dots , v_{k, \ell_k}):=\mathbf{v}.$$

Basing on the above intuition  that the maximal value of
$\sum\limits_{i,j}\sqrt{u_{i,j}}$ with $ \mathbf{u}\in U_n$ gives
the main contribution to $g(n)$, we take $U_n'$ to be the set of $
\mathbf{u}\in U_n$ for which every $u_{i,j}$ is near to $v_{i,j}$
and $U_n''$ to be the set of $ \mathbf{u}\in U_n$ for which
$u_{i,j}$ is far from $v_{i,j}$ for some pair $i,j$. Since
$$s_1u_{1,1}+\cdots +s_1 u_{1, \ell_1} +\cdots +s_ku_{k,1}+\cdots
+s_k u_{k, \ell_k} =n,$$ it is enough to take $U_n'$ to be those $
\mathbf{u}\in U_n$ for which  $u_{i,j}$ is near to $v_{i,j}$ for
all $i,j$ with $i+j>2$ and $U_n''$ to be those $ \mathbf{u}\in
U_n$ for which $u_{i,j}$ is far from $v_{i,j}$ for some pair $i,j$
with $i+j>2$. Now we give explicit  $U_n'$ and $U_n''$.

We appoint a real number $\eta$ such that
$$\frac 34 <\eta <\min\left\{\frac 56,\ \frac 34 \frac{\ell_1+\cdots +\ell_k-1}{\ell_1+\cdots
+\ell_k-2}\right\}.$$ Since the right hand side is more than
$3/4$, it follows that  such $\eta$ exists. Since $3/4<\eta<5/6$,
we have $2\eta -3/2>0$ and $3\eta -5/2<0$.

Let
$$U_n'=\{ \mathbf{u}\in U_n : |u_{i,j}-v_{i,j}|< v_{i,j}^{\eta} \text{
for all pairs } i, j \text{ with } i+j>2 \} $$ and
$$U_n''=\{ \mathbf{u}\in U_n : |u_{i,j}-v_{i,j}|\ge
v_{i,j}^{\eta} \text{ for some pair }  i, j \text{ with } i+j>2
\}.$$

We hope that $U_n'$ contributes to $g(n)$ the main term and
$U_n''$ contributes to $g(n)$ the remainder term. These will be
proved in the next section.

\section{Preliminary Lemmas}

Firstly we prove that $U_n''$ contributes to $g(n)$ the remainder
term.

\begin{lemma}\label{lem6}We have$$\sum_{\mathbf{u}\in U_n''}\prod_{\substack{1\le i\le k\\
1\le j\le \ell_i}} p(u_{i,j})= O\left( \exp \left(
c_1\sqrt{an}-c_3n^{2\eta -3/2}\right) \right), $$where $c_1$ is
given by \eqref{acc} and $c_3$ is a positive constant.
\end{lemma}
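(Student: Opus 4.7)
My strategy is to bound each factor $p(u_{i,j})$ by the trivial consequence $p(m)\ll \exp(c_1\sqrt{m})$ of \eqref{p(1)}, which is valid uniformly for $m\ge 0$ (for the finitely many small $m$ one absorbs the constant). Inserting this bound gives
\[
\sum_{\mathbf{u}\in U_n''}\prod_{\substack{1\le i\le k\\ 1\le j\le \ell_i}}p(u_{i,j})\ \ll\ \sum_{\mathbf{u}\in U_n''}\exp\!\Big(c_1\sum_{i,j}\sqrt{u_{i,j}}\Big),
\]
so the task reduces to proving the uniform deficit
\[
\sum_{i,j}\sqrt{u_{i,j}}\ \le\ \sqrt{an}-c_3'\, n^{2\eta-3/2}\qquad(\mathbf{u}\in U_n'')
\]
for some absolute constant $c_3'>0$.

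The central step is to obtain this deficit via a second-order Taylor expansion of $\sqrt{\cdot}$ around $v_{i,j}$. Setting $t_{i,j}=u_{i,j}-v_{i,j}$, the membership $\mathbf{u},\mathbf{v}\in U_n$ forces $\sum_{i,j} s_i t_{i,j}=0$. A direct computation gives $\sum_{i,j}\sqrt{v_{i,j}}=\sqrt{an}$, and since $1/(2\sqrt{v_{i,j}})=s_i\sqrt{a}/(2\sqrt{n})$ the linear Taylor term reduces to $\frac{\sqrt{a}}{2\sqrt{n}}\sum_{i,j} s_i t_{i,j}=0$. Therefore, writing $R_{i,j}=\sqrt{v_{i,j}}+t_{i,j}/(2\sqrt{v_{i,j}})-\sqrt{v_{i,j}+t_{i,j}}\ge 0$ for the concavity remainder, I obtain $\sum_{i,j}\sqrt{u_{i,j}}=\sqrt{an}-\sum_{i,j} R_{i,j}$. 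Dropping all $R_{i,j}$ save the one indexed by the pair $(i_0,j_0)$ with $i_0+j_0>2$ and $|t_{i_0,j_0}|\ge v_{i_0,j_0}^{\eta}$, I would apply the quantitative concavity estimate
\[
\sqrt{v}+\frac{t}{2\sqrt{v}}-\sqrt{v+t}\ \gg\ \frac{t^2}{v^{3/2}}\qquad\bigl(|t|/v\le 1/2\bigr),
\]
whose hypothesis is automatic for large $n$ because $\eta<1$ and $v_{i_0,j_0}\asymp n$. This delivers $R_{i_0,j_0}\gg v_{i_0,j_0}^{2\eta-3/2}\gg n^{2\eta-3/2}$, which proves the claim.

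To finish, I would combine the deficit with the crude count $|U_n''|\le |U_n|\ll n^{\ell_1+\cdots+\ell_k-1}$. Since $\eta>3/4$ ensures $2\eta-3/2>0$, the factor $\exp(-c_1 c_3'\,n^{2\eta-3/2})$ is super-polynomially small and absorbs the polynomial count after shrinking the constant slightly to some $c_3\in(0,c_1c_3')$, yielding the stated estimate. The main obstacle I foresee is the quantitative concavity step: the threshold $v_{i_0,j_0}^{\eta}$ must be \emph{large enough} for the quadratic loss to be super-polynomial at the exponential scale, yet \emph{small enough} compared to $v_{i_0,j_0}$ that the Taylor remainder is genuinely of order $t^2/v^{3/2}$ (rather than degenerating into a linear regime). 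The restriction to pairs with $i+j>2$, which exempts the unconstrained coordinate $(1,1)$, and the window $\eta\in(3/4,5/6)$ are precisely what reconcile these two requirements.
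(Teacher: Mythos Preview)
Your overall strategy is sound and differs from the paper's. The paper isolates the offending coordinate $u_{i_0,j_0}$, applies Cauchy--Schwarz to the remaining $\ell-1$ summands to reduce $\sum_{i,j}\sqrt{u_{i,j}}$ to a one-variable expression $f(\alpha)\sqrt{v_{i_0,j_0}}$, and then analyzes $f$ by calculus and a Taylor expansion at $\alpha=\pm v_{i_0,j_0}^{-(1-\eta)}$. You instead exploit directly that the first-order Taylor term vanishes (since $1/(2\sqrt{v_{i,j}})=s_i\sqrt{a}/(2\sqrt{n})$ and $\sum_{i,j} s_i t_{i,j}=0$), write $\sum_{i,j}\sqrt{u_{i,j}}=\sqrt{an}-\sum_{i,j} R_{i,j}$ with every $R_{i,j}\ge0$ by concavity, and discard all but the bad remainder. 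This is cleaner: it bypasses the Cauchy--Schwarz reduction and the explicit function $f$.

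One claim needs correction. You assert that the hypothesis $|t_{i_0,j_0}|/v_{i_0,j_0}\le 1/2$ of your concavity estimate is ``automatic for large $n$ because $\eta<1$''. That is false: membership in $U_n''$ gives only a \emph{lower} bound $|t_{i_0,j_0}|\ge v_{i_0,j_0}^{\eta}$, and in fact $t_{i_0,j_0}/v_{i_0,j_0}$ can be as large as $s_{i_0}a-1$ (since $u_{i_0,j_0}\le n/s_{i_0}$). The fix is immediate. For $s:=t/v\in[-1,C]$ with $C$ fixed, the function $g(s)=1+s/2-\sqrt{1+s}$ satisfies $g(s)\ge c(C)\,s^2$ on that interval (it is continuous and positive off $0$, with $g(s)/s^2\to 1/8$ as $s\to0$), so $R_{i_0,j_0}\gg t_{i_0,j_0}^2/v_{i_0,j_0}^{3/2}$ still holds with an implicit constant depending only on $\mathbf{s},\mathbf{l}$. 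Alternatively split into two ranges: if $|t|/v\le 1/2$ your stated estimate applies; if $|t|/v>1/2$ then $R\gg\sqrt{v}\gg\sqrt{n}\gg n^{2\eta-3/2}$ directly, since $\eta<1$. With this repair the deficit $\sum_{i,j}\sqrt{u_{i,j}}\le\sqrt{an}-c_3' n^{2\eta-3/2}$ follows, and the remainder of your argument (the crude bound $|U_n''|\ll n^{\ell-1}$ absorbed by the super-polynomially small factor via $\eta>3/4$) is exactly as in the paper.
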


\begin{proof}Let $\mathbf{u}\in U_n''$. Without loss of generality, we assume that
\begin{equation}\label{eq4}|u_{k,\ell_k}-v_{k,\ell_k}|\ge
v_{k,\ell_k}^{\eta}.\end{equation} Let
$$u_{k,\ell_k}=v_{k,\ell_k}+\alpha v_{k,\ell_k}.$$
Then $|\alpha| \ge v_{k,\ell_k}^{-(1-\eta)}$. Since $\mathbf{u}\in
U_n''$, it follows that $n\ge s_k u_{k,\ell_k}$. Noting that
$n=s_k^2 a v_{k,\ell_k}$, we have
$$v_{k,\ell_k}+\alpha v_{k,\ell_k}=u_{k,\ell_k}\le \frac
n{s_k}=s_kav_{k,\ell_k}.$$ It follows that $1+\alpha \le s_ka$. So
$(s_ka-1)^{-1} \alpha\le 1$.

By the Cauchy-Schwarz inequality,  we have
\begin{eqnarray*}\sum_{\substack{1\le i\le k\\ 1\le j\le \ell_i}} \sqrt{u_{i,j}}
&=& \sqrt{u_{k,\ell_k}}+\sum_{\substack{1\le i\le k\\ 1\le j\le
\ell_i, (i,j)\not= (k,\ell_k)}}\frac 1{\sqrt{s_i}}
\sqrt{s_iu_{i,j}}\\
&\le &\sqrt{u_{k,\ell_k}}+\Big(\sum_{\substack{1\le i\le k\\ 1\le
j\le \ell_i, (i,j)\not= (k,\ell_k)}}\frac 1{s_i}\Big)^{1/2}
\Big(\sum_{\substack{1\le i\le k\\ 1\le j\le \ell_i, (i,j)\not= (k,\ell_k)}}s_iu_{i,j}\Big)^{1/2}\\
&=&\sqrt{u_{k,\ell_k}}+\Big( a-\frac
1{s_k}\Big)^{1/2} \sqrt{n-s_ku_{k,\ell_k}}\\
&=& \big(v_{k,\ell_k}+\alpha
v_{k,\ell_k}\big)^{1/2}\\
&& +\Big(a-\frac 1{s_k}\Big)^{1/2} \big(
s_k^2av_{k,\ell_k}-s_kv_{k,\ell_k}-\alpha s_kv_{k,\ell_k} \big)^{1/2}\\
&=& f(\alpha ) \sqrt{v_{k,\ell_k}},
\end{eqnarray*}
where
$$f(x) =(1+x
)^{1/2}+(s_ka-1) \left( 1-(s_ka-1)^{-1}x \right)^{1/2}.$$ Since
$$f'(x) =\frac 12 (1+x
)^{-1/2}-\frac 12\left( 1-(s_ka-1)^{-1}x \right)^{-1/2},$$ it
follows that $f'(x)>0$ for $x<0$ and $f'(x)<0$ for $x>0$. By
$|\alpha| \ge v_{k,\ell_k}^{-(1-\eta)}$, we have
$$f(\alpha )\le \max\{
f(-v_{k,\ell_k}^{-(1-\eta)}),f(v_{k,\ell_k}^{-(1-\eta)})\}.$$ For
$\beta \in \{ -v_{k,\ell_k}^{-(1-\eta)},
v_{k,\ell_k}^{-(1-\eta)}\}$, we have \begin{eqnarray*}f(\beta
)&=&1+\frac 12\beta-\frac 18
\beta^2+O(\beta^3)\\
&&+(s_ka-1)
 \Big( 1 -\frac 12 (s_ka-1)^{-1}\beta-\frac 18
(s_ka-1)^{-2}\beta^2+O(\beta^3)\Big)\\
&=&s_ka-\frac{s_ka}{8(s_ka-1)}\beta^2+O(\beta^3)\\
&=&s_ka-\frac{s_ka}{8(s_ka-1)}v_{k,\ell_k}^{-2(1-\eta)}+O(v_{k,\ell_k}^{-3(1-\eta)}).
\end{eqnarray*}
Hence
\begin{eqnarray*}\sum_{\substack{1\le i\le k\\ 1\le j\le \ell_i}} \sqrt{u_{i,j}}
&\le& s_ka
\sqrt{v_{k,\ell_k}}-\frac{s_ka}{8(s_ka-1)}v_{k,\ell_k}^{-2(1-\eta)+1/2}+O(v_{k,\ell_k}^{-3(1-\eta)+1/2})\\
&\le &\sqrt{an} - \delta n^{2\eta -3/2}\end{eqnarray*} for all
sufficiently large integers $n$, where $\delta$ is a positive
constant. Thus, noting that $s_k^2 av_{k,\ell_k}=n$, for any
$\mathbf{u}\in U_n''$, and by \eqref{eqw2}, we have
\begin{eqnarray*}&&\sum_{\mathbf{u}\in U_n''}\prod_{\substack{1\le i\le k\\ 1\le j\le \ell_i}}
p(u_{i,j})\\
&\ll &\sum_{\mathbf{u}\in U_n''} \exp \Big( c_1
\sum_{\substack{1\le
i\le k\\ 1\le j\le \ell_i}} \sqrt{u_{i,j}}\Big)\\
&\ll &\sum_{\mathbf{u}\in U_n''}\exp \left( c_1\sqrt{an}-c_1\delta
n^{2\eta -3/2}\right)\\
&\ll &(n+1)^{\ell_1+\cdots +\ell_k-1}\exp \left(
c_1\sqrt{an}-c_1\delta n^{2\eta -3/2}\right)\\
&\ll &\exp \left( c_1\sqrt{an}-c_3n^{2\eta
-3/2}\right),\end{eqnarray*}for all sufficiently large integers
$n$, where $c_3$ is a positive constant. This completes the proof
of Lemma \ref{lem6}.
\end{proof}

Lemma \ref{lem6} deals with those $\mathbf{u}$ for which $u_{i,j}$
is far from $v_{i,j}$ for some pair $i,j$ with $i+j>2$. These
$\mathbf{u}$ contribute to $g(n)$ with the remainder. Now we deal
with all $\mathbf{u}$ for which $u_{i,j}$ is near to $v_{i,j}$ for
every pair $i,j$ with $i+j>2$.

\begin{lemma}\label{lem7} We have \begin{eqnarray*}\sum_{\mathbf{u}\in U_n'}\prod_{\substack{1\le i\le k\\ 1\le j\le \ell_i}}
p(u_{i,j}) =\frac{c_2^{\ell_1+\cdots +\ell_k}+O(n^{\eta-1})}{\prod
v_{i,j}}\sum_{\mathbf{u}\in U_n'} \exp \Big( c_1 \sum_{i,j}
\sqrt{u_{i,j}}\Big), \end{eqnarray*} where $c_1$ and $c_2$ are
given by \eqref{acc}.
\end{lemma}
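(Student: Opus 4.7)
The plan is to apply the asymptotic formula $(2.2)$ for $p(m)$ to each factor $p(u_{i,j})$ in the product, using that every $u_{i,j}$ is very close to $v_{i,j}\asymp n$ uniformly for $\mathbf{u}\in U_n'$, and then to factor the resulting uniform error out of the sum.

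The first step is to verify that $|u_{1,1}-v_{1,1}|\ll n^{\eta}$ as well, even though the defining condition of $U_n'$ excludes the pair $(1,1)$. This is forced by the linear constraint $\sum s_i u_{i,j}=n=\sum s_i v_{i,j}$, which gives
$$s_1(u_{1,1}-v_{1,1})=-\sum_{(i,j)\ne(1,1)} s_i(u_{i,j}-v_{i,j}).$$
Each summand on the right is bounded by $s_i v_{i,j}^{\eta}\ll n^{\eta}$ and there are fewer than $\ell$ of them, so indeed $|u_{1,1}-v_{1,1}|\ll n^{\eta}$. Combined with $v_{i,j}=n/(s_i^2 a)\asymp n$, this yields
$$u_{i,j}=v_{i,j}\bigl(1+O(n^{\eta-1})\bigr)$$
for every $(i,j)$, uniformly in $\mathbf{u}\in U_n'$. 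In particular $u_{i,j}\to\infty$ as $n\to\infty$, so $(2.2)$ applies.

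Next I substitute $(2.2)$ into each factor. Writing $1/u_{i,j}=\bigl(1+O(n^{\eta-1})\bigr)/v_{i,j}$ and using that $\eta>3/4>1/2$ forces $u_{i,j}^{-1/2}\ll n^{-1/2}=o(n^{\eta-1})$, every error term may be absorbed into a single $O(n^{\eta-1})$, giving
$$p(u_{i,j})=\frac{c_2+O(n^{\eta-1})}{v_{i,j}}\exp\bigl(c_1\sqrt{u_{i,j}}\bigr)$$
uniformly in $\mathbf{u}\in U_n'$. Taking the product over the $\ell=\ell_1+\cdots+\ell_k$ pairs (a bounded number of factors) and expanding collapses the error into
$$\prod_{\substack{1\le i\le k\\ 1\le j\le \ell_i}} p(u_{i,j})=\frac{c_2^{\ell}+O(n^{\eta-1})}{\prod v_{i,j}}\exp\Bigl(c_1\sum_{i,j}\sqrt{u_{i,j}}\Bigr).$$

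Finally, summing over $\mathbf{u}\in U_n'$ and pulling the $\mathbf{u}$-independent prefactor outside the sum yields the claimed identity, the extraction being valid because the implied constant in the $O(n^{\eta-1})$ is uniform over $U_n'$. The argument is essentially uniform Taylor bookkeeping; the one step I would flag as the key idea is the first one, namely recovering control on the \emph{unconstrained} coordinate $u_{1,1}$ from the linear relation, which is what justifies writing a single $O(n^{\eta-1})$ valid for \emph{all} indices $(i,j)$.
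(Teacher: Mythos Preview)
Your proof is correct and follows essentially the same approach as the paper: recover control on $u_{1,1}$ from the linear constraint $\sum s_i u_{i,j}=n=\sum s_i v_{i,j}$, deduce $u_{i,j}=v_{i,j}(1+O(n^{\eta-1}))$ uniformly, apply \eqref{p(1)} to each factor, absorb $n^{-1/2}$ into $O(n^{\eta-1})$ using $\eta>3/4$, and multiply out the bounded number of factors. The paper organizes the final step by first invoking the already-established identity \eqref{eqw1} and then replacing $\prod u_{i,j}$ by $\prod v_{i,j}$, but this is only a cosmetic difference in presentation.
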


\begin{proof} Recall that $$U_n'=\{ \mathbf{u}\in U_n : |u_{i,j}-v_{i,j}|<
v_{i,j}^{\eta} \text{ for all } i+j>2 \} .$$ Let $\mathbf{u}\in
U_n''$. By $\sum\limits_{i,j} s_iv_{i,j}=n$ and
$|u_{i,j}-v_{i,j}|< v_{i,j}^{\eta}$, we have
\begin{eqnarray*}|u_{1,1}-v_{1,1}|=\Big|\sum_{i+j>2}s_i(u_{i,j}-v_{i,j})\Big|
< \sum_{i+j>2}s_iv_{i,j}^\eta
<\sum_{i+j>2}s_in^{\eta}.\end{eqnarray*} Thus $u_{i,j}\to +\infty
$ as $n\to +\infty$ for $i+j\ge2$. Moreover, by $v_{i,j}=O(n )$,
we have $|u_{i,j}-v_{i,j}|\ll n^{\eta}$ for all $i,j$ with
$i+j\ge2$. Thus $u_{i,j}=O(n)$ all $i,j$ with $i+j\ge2$. Hence
$$\frac 1{u_{i,j}}=\frac 1{v_{i,j} +O(n^{\eta})}=\frac 1{v_{i,j} (1+O(n^{\eta-1}))}
=\frac 1{v_{i,j}} (1+O(n^{\eta-1})), \quad i+j\ge2$$ and
$$\sum_{\substack{1\le i\le k\\ 1\le j\le \ell_i}}
u_{i,j}^{-1/2}=O(n^{-1/2}).$$ Noting that
$$\prod_{\substack{1\le i\le k\\ 1\le j\le \ell_i}}
(1+O(n^{\eta-1})) =1+O(n^{\eta-1}),$$ from the above arguments,
\eqref{eqw1} and $\dfrac 34<\eta<\dfrac 56$, we have
\begin{eqnarray*}&&\sum_{\mathbf{u}\in U_n'}\prod_{\substack{1\le i\le k\\ 1\le j\le \ell_i}}
p(u_{i,j})\\
&=&\frac{c_2^{\ell_1+\cdots +\ell_k}+O(n^{-1/2})}{\prod
v_{i,j}}\big(1+O(n^{\eta-1})\big)\sum_{\mathbf{u}\in U_n'} \exp
\Big( c_1 \sum_{i,j}
\sqrt{u_{i,j}}\Big)\\
&=&\frac{c_2^{\ell_1+\cdots +\ell_k}+O(n^{\eta-1})}{\prod
v_{i,j}}\sum_{\mathbf{u}\in U_n'} \exp \Big( c_1 \sum_{i,j}
\sqrt{u_{i,j}}\Big). \end{eqnarray*}
\end{proof}

The following two lemmas devote to convert summations on integral
variables into integrals.

\begin{lemma}\label{lem2a} Suppose that $f$ is a function on $[a, b]$
such that $f'$ exists with $m$ zero points on $(a, b)$. Then
$$\sum_{a\le n\le b} f(n) =\int_a^b f(x) dx+O\Big( (m+1)\max_{a\le
x\le b} |f(x)|\Big).$$
\end{lemma}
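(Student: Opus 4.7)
The plan is to exploit the hypothesis that $f'$ has only $m$ zeros on $(a,b)$ to partition $[a,b]$ into at most $m+1$ subintervals on each of which $f$ is monotone, and then apply the standard step-function-versus-integral comparison on each piece, summing up the errors at the end.

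Concretely, I would label the zeros of $f'$ as $x_1<x_2<\cdots<x_m$ and set $x_0=a$, $x_{m+1}=b$. On each subinterval $[x_i,x_{i+1}]$, $f'$ has constant sign (since $f'$ is continuous and nonvanishing on the open subinterval), so $f$ is monotone there. Fixing one such interval $[\alpha,\beta]$ and assuming without loss of generality that $f$ is nondecreasing on it, the textbook comparison
$$f(n)\le \int_{n}^{n+1} f(x)\,dx \le f(n+1), \qquad \alpha\le n,\ n+1\le \beta,$$
telescopes to give
$$\Bigl|\sum_{\alpha\le n\le \beta} f(n)\ -\ \int_{\alpha}^{\beta} f(x)\,dx\Bigr|\ \ll\ \max_{\alpha\le x\le \beta}|f(x)|,$$
where the right-hand side absorbs the two ``fractional'' pieces of the integral over $[\alpha,\lceil\alpha\rceil]$ and $[\lfloor\beta\rfloor,\beta]$ (each of length at most $1$) together with the boundary shift by one term.

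Finally, I would sum this local estimate over the $m+1$ monotone pieces, using $\max_{[x_i,x_{i+1}]}|f|\le \max_{[a,b]}|f|$ to uniformize the right-hand side. Integer endpoints shared between adjacent pieces may be counted twice, but this contributes only an extra $O(m\max|f|)$ which is already absorbed. The argument is essentially routine; the only point requiring care is the bookkeeping of the endpoint fractional intervals and of the shared integer endpoints, but both contributions are of size $O(\max|f|)$ per interval, so no genuine obstacle arises.
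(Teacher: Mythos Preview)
Your proposal is correct and follows essentially the same approach as the paper: partition $[a,b]$ at the zeros of $f'$ into $m+1$ monotone pieces, apply the elementary sum-versus-integral comparison on each, and add the $O(\max|f|)$ errors. One minor remark: you justify constant sign of $f'$ on each subinterval by continuity of $f'$, which is not assumed; the correct reason is Darboux's theorem (derivatives have the intermediate value property), but this does not affect the argument.
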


\begin{proof} We divide $[a,b]$ into $m+1$ intervals
 $[a_0, a_1], [a_1, a_2],\cdots, [a_m, a_{m+1}]$
such that $f'\not= 0$ on $(a_i, a_{i+1})$. Given $0\le i\le m$.
 Without loss of generality, we assume that $f'> 0$ on
$(a_i, a_{i+1})$. For any integer $n\in (a_i+1, a_{i+1}-1)$, we
have
$$\int_{n-1}^n f(x) dx \le f(n) \le \int_n^{n+1} f(x) dx .$$
Summing  on $n$, we have
$$\Big|\sum_{a_i\le n\le a_{i+1}}
f(n) - \int_{a_i}^{a_{i+1}} f(x) dx \Big| =O \left( \max_{a_{i}\le
x\le a_{i+1}} |f(x)|\right) = O \left(\max_{a\le x\le b}
|f(x)|\right) .$$ Then
\begin{eqnarray*}\Big|\sum_{a\le n\le b} f(n) - \int_a^b f(x) dx\Big|
&\le&\sum_{i=0}^m\Big|\sum_{a_i\le n\le a_{i+1}}
f(n) - \int_{a_i}^{a_{i+1}} f(x) dx \Big|\\
&=&  O\left( (m+1) \max_{a\le x\le b} |f(x)|\right)
.\end{eqnarray*} This completes the proof.
\end{proof}

\begin{lemma}\label{lem2b}Suppose that $f$ is a function on
$$D=\{ (x_1, \dots, x_s) :a_i\le x_i\le a_i+b_i, \ 1\le i\le s \}$$
such that, for each $i$ and fixed $\{ x_1, \dots, x_s \} \setminus
\{ x_i\} $, the partial derivative  $\frac{\partial}{\partial x_i
}f $  exists with at most $m_i$ zero points on $[a_i, a_i+b_i]$.
Then
\begin{eqnarray*}&&\sum_{(n_1, \dots, n_s)\in D\cap \mathbb{Z}^s}
f(n_1, \dots, n_s)\\
& =&\underset{D}{\int\cdots\int }f(x_1, \dots, x_s) dx_1 \cdots d
x_s +O\Big( \sum_{1\le i\le s}\frac {m_i+1} {b_i}
BM\Big),\end{eqnarray*} where $B=b_1b_2\cdots b_s$ and $M=\max_D
|f(x_1, \dots, x_s)|$.
\end{lemma}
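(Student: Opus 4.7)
The plan is to reduce to the one-variable Lemma \ref{lem2a} by peeling off the summation variables one at a time, replacing $\sum_{n_i}$ by $\int dx_i$ in order $i = s, s-1, \dots, 1$. Concretely, for $0 \le r \le s$ I introduce the hybrid quantity
$$S_r = \sum_{\substack{n_1,\dots,n_{s-r}\in\mathbb{Z}\\ a_i\le n_i\le a_i+b_i}} \int_{a_{s-r+1}}^{a_{s-r+1}+b_{s-r+1}}\cdots\int_{a_s}^{a_s+b_s} f(n_1,\dots,n_{s-r},x_{s-r+1},\dots,x_s)\,dx_{s-r+1}\cdots dx_s,$$
so that $S_0$ is the sum appearing in the lemma and $S_s$ is the multi-dimensional integral. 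The claim reduces to bounding the telescoping errors $S_r - S_{r+1}$.

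To estimate $S_r - S_{r+1}$, I first swap the (finite) outer sum over $(n_1,\dots,n_{s-r-1})$ with the integrals over $(x_{s-r+1},\dots,x_s)$ so that the difference involves only the one-dimensional discrepancy
$$\sum_{n_{s-r}} h(n_{s-r}) - \int h(x_{s-r})\,dx_{s-r}, \quad \text{where } h(t) := f(n_1,\dots,n_{s-r-1},t,x_{s-r+1},\dots,x_s),$$
with the other variables (some integer, some real) held fixed. The hypothesis of the lemma gives precisely that $h'$ has at most $m_{s-r}$ zeros on $[a_{s-r}, a_{s-r}+b_{s-r}]$, and $|h| \le M$, so Lemma \ref{lem2a} bounds this one-dimensional difference by $O((m_{s-r}+1)M)$.

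Integrating this over $(x_{s-r+1},\dots,x_s)$ then contributes a factor $\prod_{j>s-r} b_j$, and summing over $(n_1,\dots,n_{s-r-1})$ contributes a factor $\prod_{i<s-r}(b_i+1)$. In the standard regime where each $b_i \ge 1$ (so $b_i+1 \ll b_i$), this yields $|S_r - S_{r+1}| \ll (m_{s-r}+1)\,BM / b_{s-r}$. Telescoping $S_0 - S_s = \sum_{r=0}^{s-1}(S_r - S_{r+1})$ then gives exactly the error estimate claimed.

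The only delicate point is bookkeeping the $b_i$'s and $(b_i+1)$'s consistently as variables are peeled off; the analytic content is entirely contained in Lemma \ref{lem2a}. I would specifically avoid the alternative of inducting on $s$ via $F(x_1,\dots,x_{s-1}) := \int_{a_s}^{a_s+b_s} f\,dx_s$, since the hypothesis about partial derivatives having at most finitely many zeros need not be inherited by such an integrated $F$; the iterated-peeling formulation sidesteps this issue cleanly.
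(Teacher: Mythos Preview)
Your proposal is correct and is essentially the same argument as the paper's: both peel off one variable at a time, swap the finite sum with the already-accumulated integrals, and apply Lemma~\ref{lem2a} to the resulting one-dimensional discrepancy, picking up an error of size $(m_i+1)\,BM/b_i$ at the $i$-th step. The only cosmetic difference is that the paper peels variables in the order $1,2,\dots,s$ rather than your $s,s-1,\dots,1$, and it is slightly less explicit than you are about the swap and about the $b_i+1$ versus $b_i$ bookkeeping (which, as you note, requires $b_i\ge 1$).
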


\begin{proof} Let
$c_i=a_i+b_i (1\le i\le s)$. By Lemma \ref{lem2a}, we have
\begin{eqnarray*}&&\sum_{(n_1, \dots, n_s)\in D\cap \mathbb{Z}^s}
f(n_1, \dots, n_s)\\
&=& \sum_{\substack{ a_2\le n_2\le c_2\\\cdots\\
 a_s\le n_s\le c_s}} \left( \int_{a_1}^{c_1} f(x_1, n_2, \dots, n_s) d x_1
+ O((m_1+1)M)\right)\\
&=& \sum_{\substack{ a_2\le n_2\le c_2\\\cdots\\ a_s\le n_s\le
c_s}}  \int_{a_1}^{c_1} f(x_1, n_2,\cdots, n_s) d x_1
+ O\big(\frac {m_1+1}{b_1} B M\big)\\
&=& \sum_{\substack {a_3\le n_3\le c_3\\ \cdots \\a_s\le n_s\le
c_s} }\left( \int_{a_2}^{c_2} \int_{a_1}^{c_1} f(x_1, x_2, n_3,
\dots,n_s) d x_1 d x_2 + O((m_2+1)b_1 M) \right)\\
&& \
+ O\big(\frac {m_1+1} {b_1} B M\big)\\
&=& \sum_{\substack {a_3\le n_3\le c_3\\ \cdots \\a_s\le n_s\le
c_s} } \int_{a_2}^{c_2} \int_{a_1}^{c_1} f(x_1, x_2, n_3,
\dots,n_s) d x_1 d x_2+O\Big( \big( \frac {m_1+1} {b_1}
+\frac {m_2+1} {b_2}\big) B M\Big)\\
&\vdots&\\
&=&\underset{D}{\int\cdots\int }f(x_1, \dots, x_s) dx_1 \cdots d
x_s +O\Big( \sum_{1\le i\le s}\frac {m_i+1} {b_i} B M\Big).
\end{eqnarray*} This completes the proof of Lemma
\ref{lem2b}.\end{proof}

\begin{lemma}\label{lem9} We have  \begin{eqnarray*}\sum_{\mathbf{u}\in U_n'}\prod_{\substack{1\le i\le k\\ 1\le j\le \ell_i}}
p(u_{i,j}) &=& \frac{c_2^{\ell_1+\cdots
+\ell_k}+O(n^{\eta-1})}{\prod v_{i,j}}\underset{D_1}{\int \cdots
\int } \exp \Big( c_1
\sum_{i,j} \sqrt{x_{i,j}}\Big) d x_{1,2} \cdots d x_{k,\ell_k}\\
&& +O\left(n^{-(1-\eta)(\ell_1+\cdots +\ell_k)-2\eta} \exp \left(
c_1\sqrt{an} \right) \right),
\end{eqnarray*}where $c_1$ and $c_2$ are given by \eqref{acc}, and  $D_1$ is the set of all
$\ell_1+\cdots +\ell_k -1$ tuples $(x_{1,2}, \dots ,
x_{k,\ell_k})$ of real numbers with
\begin{equation}\label{eqn1}|x_{i,j}-v_{i,j}|\le
v_{i,j}^{\eta}\end{equation} for all $i+j>2$, and
$$x_{1,1}=n-\sum_{i+j>2} s_i x_{i,j} .$$
\end{lemma}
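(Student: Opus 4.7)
The plan is to apply the multivariate sum-to-integral formula of Lemma \ref{lem2b} to the inner sum $\sum_{\mathbf{u}\in U_n'}\exp(c_1\sum_{i,j}\sqrt{u_{i,j}})$ appearing on the right-hand side of Lemma \ref{lem7}, and then recombine. I will first view $U_n'$ as an integer box in $\mathbb{R}^{s}$ with $s:=\ell_1+\cdots+\ell_k-1$, by treating $(u_{1,2},\ldots,u_{k,\ell_k})$ as independent coordinates, each confined to an interval of length $b_{i,j}=2v_{i,j}^{\eta}$ of order $n^\eta$, while $u_{1,1}=n-\sum_{i+j>2}s_iu_{i,j}$ is determined. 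Since the $s_i$ and $n$ are integers, $u_{1,1}$ is automatically an integer; and the bound $|u_{1,1}-v_{1,1}|\ll n^\eta$ derived at the start of the proof of Lemma \ref{lem7}, together with $v_{1,1}=n/a\asymp n$, forces $u_{1,1}>0$ for all large $n$. Thus the sum over $U_n'$ is exactly the sum over the integer lattice points of this box, and the integral over $D_1$ in the statement is the natural continuous counterpart.

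Next I will compute the partial derivatives of $F(x_{1,2},\ldots,x_{k,\ell_k})=\exp(c_1\sum_{i,j}\sqrt{x_{i,j}})$, with $x_{1,1}$ defined by the linear constraint. For $(i,j)\ne (1,1)$ one finds
$$\frac{\partial F}{\partial x_{i,j}}=\frac{c_1}{2}\,F\!\left(\frac{1}{\sqrt{x_{i,j}}}-\frac{s_i}{\sqrt{x_{1,1}}}\right),$$
which vanishes only at the single point $x_{i,j}=x_{1,1}/s_i^2$, so the zero-count hypothesis $m_{i,j}\le 1$ of Lemma \ref{lem2b} holds. The Cauchy-Schwarz inequality from Section 2 gives $\sum_{i,j}\sqrt{x_{i,j}}\le\sqrt{an}$ everywhere on $D_1$, whence $M:=\max_{D_1}|F|\le \exp(c_1\sqrt{an})$. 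With $B=\prod_{(i,j)\ne(1,1)}b_{i,j}$ of order $n^{\eta s}$, Lemma \ref{lem2b} yields
$$\sum_{\mathbf{u}\in U_n'}\exp\!\Big(c_1\sum_{i,j}\sqrt{u_{i,j}}\Big)=\int_{D_1}\exp\!\Big(c_1\sum_{i,j}\sqrt{x_{i,j}}\Big)\,dx_{1,2}\cdots dx_{k,\ell_k}+O\bigl(n^{\eta(s-1)}\exp(c_1\sqrt{an})\bigr),$$
since $\sum_{(i,j)\ne(1,1)}(m_{i,j}+1)/b_{i,j}\ll n^{-\eta}$.

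Finally, substituting this identity into Lemma \ref{lem7} and using $\prod v_{i,j}\asymp n^{\ell_1+\cdots+\ell_k}$ turns the sum-to-integral error into
$$O\bigl(n^{-(\ell_1+\cdots+\ell_k)+\eta(\ell_1+\cdots+\ell_k-2)}\exp(c_1\sqrt{an})\bigr)=O\bigl(n^{-(1-\eta)(\ell_1+\cdots+\ell_k)-2\eta}\exp(c_1\sqrt{an})\bigr),$$
which is exactly the error claimed in Lemma \ref{lem9}, while the main term inherits the coefficient $(c_2^{\ell_1+\cdots+\ell_k}+O(n^{\eta-1}))/\prod v_{i,j}$ directly from Lemma \ref{lem7}. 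The step I expect to require the most care is the bookkeeping described in the first paragraph: verifying that as $(u_{1,2},\ldots,u_{k,\ell_k})$ ranges over the integer box, the determined coordinate $u_{1,1}$ is automatically a nonnegative integer satisfying the full constraint $\sum_{i,j}s_iu_{i,j}=n$, so that the sum over $U_n'$ is a genuine product-box sum to which Lemma \ref{lem2b} applies without boundary adjustments.
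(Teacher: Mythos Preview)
Your proposal is correct and follows essentially the same route as the paper: invoke Lemma~\ref{lem7}, apply Lemma~\ref{lem2b} to the exponential sum over the box $D_1$, and bound $M$ by $\exp(c_1\sqrt{an})$ via Cauchy--Schwarz. In fact you supply more detail than the paper does---the explicit computation of $\partial F/\partial x_{i,j}$ to verify $m_{i,j}\le 1$, and the check that $u_{1,1}$ is automatically a nonnegative integer---both of which the paper leaves implicit.
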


\begin{proof} By Lemma \ref{lem7}, we have
\begin{eqnarray*}&&\sum_{\mathbf{u}\in U_n'}\prod_{\substack{1\le i\le k\\ 1\le j\le \ell_i}}
p(u_{i,j})=\frac{c_2^{\ell_1+\cdots +\ell_k}+O(n^{\eta-1})}{\prod
v_{i,j}}\sum_{\mathbf{u}\in U_n'} \exp \Big( c_1 \sum_{i,j}
\sqrt{u_{i,j}}\Big). \end{eqnarray*} For the function
$$f(x_{1,2}, \dots,
x_{1,\ell_1}\dots,x_{k,1},\dots,x_{k,\ell_k})=\exp{\Big(\sqrt{n-\sum_{i+j>2}s_ix_{i,j}}+\sum_{i+j>2}\sqrt{x_{i,j}}\Big)},$$
 by Lemma
\ref{lem2b}, we have\begin{eqnarray*}&&\sum_{\mathbf{u}\in
U_n'}\prod_{\substack{1\le i\le k\\ 1\le j\le \ell_i}}
p(u_{i,j})\\&=&\frac{c_2^{\ell_1+\cdots
+\ell_k}+O(n^{\eta-1})}{\prod v_{i,j}}\underset{D_1}{\int \cdots
\int } \exp \Big( c_1
\sum_{i,j} \sqrt{x_{i,j}}\Big) d x_{1,2} \cdots d x_{k,\ell_k}\\
&& +O\Big(n^{-(1-\eta)(\ell_1+\cdots +\ell_k)-2\eta}
\max_{D_1}\exp \Big( c_1 \sum_{i,j} \sqrt{x_{i,j}}\Big) \Big).
\end{eqnarray*}
By the Cauchy-Schwarz inequality, we
have\begin{eqnarray*}\sum_{i,j} \sqrt{x_{i,j}}=&=&\sum_{i,j}\frac
1{\sqrt{s_i}}
\sqrt{s_ix_{i,j}}\\
&\le &\Big(\sum_{i,j}\frac 1{s_i}\Big)^{1/2}
\Big(\sum_{i,j}s_ix_{i,j}\Big)^{1/2}\\
&=&\sqrt{an}.
\end{eqnarray*}
Thus,we have\begin{eqnarray*}&&O\Big(n^{-(1-\eta)(\ell_1+\cdots
+\ell_k)-2\eta} \max_{D_1}\exp \Big( c_1 \sum_{i,j}
\sqrt{x_{i,j}}\Big) \Big)\\&=&O\left(n^{-(1-\eta)(\ell_1+\cdots
+\ell_k)-2\eta} \exp \left( c_1\sqrt{an} \right)
\right).\end{eqnarray*}
\end{proof}

\begin{lemma}\label{lem8}
We have \begin{eqnarray*} g(n) &=&\frac{c_2^{\ell_1+\cdots
+\ell_k}+O(n^{3\eta -5/2})}{v_{1,1}}\left(\frac
{8}{c_1}\right)^{(\ell_1+\cdots +\ell_k -1)/2}
\left(\frac{v_{1,1}}{\prod v_{i,j}}\right)^{1/4}\exp
(c_1\sqrt{an})\nonumber\\
&&\sqrt{s_1}^{\ell_1-1} \sqrt{s_2}^{\ell_2}\cdots
\sqrt{s_k}^{\ell_k}\cdot \underset{\Omega}{\int \cdots \int } \exp
\Big( -\sum_{i,j} s_i w_{i,j}^2\Big) d w_{1,2} \cdots d
w_{k,\ell_k}\nonumber \\
&& +O\left(n^{-(1-\eta)(\ell_1+\cdots +\ell_k)-2\eta} \exp \left(
c_1\sqrt{an} \right) \right),
\end{eqnarray*}
where $\Omega$ is the set of all $\ell_1+\cdots +\ell_k -1$ tuples
$(w_{1,2}, \dots , w_{k,\ell_k})$ of real numbers with $
|w_{i,j}|< \sqrt{\dfrac{c_1}{8s_i}}v_{i,j}^{\eta -3/4}$ for all
$i+j>2$, and $ w_{1,1}=-\sum_{i+j>2}w_{i,j}$.

\end{lemma}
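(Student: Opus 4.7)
The plan is to reduce the integral in Lemma \ref{lem9} to a Gaussian one by a shift plus a rescaling, and then combine the result with the $U_n''$ estimate of Lemma \ref{lem6}. First, substitute $x_{i,j}=v_{i,j}+t_{i,j}$ so that $D_1$ becomes the region $|t_{i,j}|\le v_{i,j}^{\eta}$ for $i+j>2$, subject to the constraint $t_{1,1}=-\sum_{i+j>2}s_it_{i,j}$. Since $|t_{i,j}|/v_{i,j}\le v_{i,j}^{\eta-1}=O(n^{\eta-1})$, a Taylor expansion yields
$$\sqrt{x_{i,j}}=\sqrt{v_{i,j}}+\frac{t_{i,j}}{2\sqrt{v_{i,j}}}-\frac{t_{i,j}^{2}}{8v_{i,j}^{3/2}}+O\left(\frac{|t_{i,j}|^{3}}{v_{i,j}^{5/2}}\right),$$
whose cubic remainder is $O(v_{i,j}^{3\eta-5/2})=O(n^{3\eta-5/2})$. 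Using $\sqrt{v_{i,j}}=\sqrt{n/a}/s_i$, the constant term sums to $c_1\sqrt{an}$, and the linear term vanishes because
$$\sum_{i,j}\frac{t_{i,j}}{2\sqrt{v_{i,j}}}=\frac{\sqrt{a}}{2\sqrt{n}}\sum_{i,j}s_{i}t_{i,j}=0$$
by the defining constraint. Hence $\exp(c_1\sum\sqrt{x_{i,j}})=\exp(c_1\sqrt{an})\exp(-\tfrac{c_1}{8}\sum t_{i,j}^{2}/v_{i,j}^{3/2})(1+O(n^{3\eta-5/2}))$.

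Next, I rescale $w_{i,j}=\sqrt{c_1/(8s_iv_{i,j}^{3/2})}\,t_{i,j}$ for $i+j>2$, which turns each quadratic term $-c_1t_{i,j}^{2}/(8v_{i,j}^{3/2})$ into $-s_iw_{i,j}^{2}$ and sends the region into $\Omega$. The key observation is that $s_i^{3}v_{i,j}^{3/2}=n^{3/2}/a^{3/2}$ is independent of $(i,j)$, so $s_i\sqrt{8s_iv_{i,j}^{3/2}/c_1}$ is likewise independent of $(i,j)$; therefore the linear constraint $t_{1,1}=-\sum_{i+j>2}s_it_{i,j}$ transforms exactly into $w_{1,1}=-\sum_{i+j>2}w_{i,j}$, and the $(1,1)$-term fits naturally into the symmetric Gaussian exponent $-\sum_{i,j}s_iw_{i,j}^{2}$. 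The Jacobian is
$$\prod_{i+j>2}\sqrt{\frac{8s_iv_{i,j}^{3/2}}{c_1}}=\left(\frac{8}{c_1}\right)^{(\ell_1+\cdots+\ell_k-1)/2}\sqrt{s_1}^{\ell_1-1}\sqrt{s_2}^{\ell_2}\cdots\sqrt{s_k}^{\ell_k}\cdot\frac{(\prod v_{i,j})^{3/4}}{v_{1,1}^{3/4}},$$
and combining with the prefactor $1/\prod v_{i,j}$ from Lemma \ref{lem9} produces $\frac{1}{v_{1,1}}\left(\frac{v_{1,1}}{\prod v_{i,j}}\right)^{1/4}$.

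Putting these together gives the stated main term times $1+O(n^{3\eta-5/2})+O(n^{\eta-1})=1+O(n^{3\eta-5/2})$, since $\eta>3/4$ implies $3\eta-5/2>\eta-1$. Finally, Lemma \ref{lem6} contributes $O(\exp(c_1\sqrt{an}-c_3n^{2\eta-3/2}))$, and Lemma \ref{lem9} already leaves a residual $O(n^{-(1-\eta)(\ell_1+\cdots+\ell_k)-2\eta}\exp(c_1\sqrt{an}))$; both are absorbed in the claimed error term. The main obstacle is the careful bookkeeping: checking the fortuitous coincidence (driven by $s_i^{2}v_{i,j}=n/a$) that makes the linear term vanish and simultaneously converts the constraint cleanly into $w_{1,1}=-\sum_{i+j>2}w_{i,j}$, and then collecting $v_{i,j}$ powers to recover exactly $(v_{1,1}/\prod v_{i,j})^{1/4}$.
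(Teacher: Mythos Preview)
Your proposal is correct and follows essentially the same route as the paper: combine Lemmas \ref{lem6} and \ref{lem9} via \eqref{eqg(n)}, shift to center at $\mathbf{v}$, Taylor-expand $\sqrt{x_{i,j}}$, use $s_i^2v_{i,j}=n/a$ to kill the linear term and to convert the linear constraint into $w_{1,1}=-\sum_{i+j>2}w_{i,j}$, and track the Jacobian. The only cosmetic difference is that the paper performs the rescaling in two stages (first $y_{i,j}=t_{i,j}/v_{i,j}$, then $z_{i,j}=v_{i,j}^{1/4}y_{i,j}$, then $w_{i,j}=\sqrt{c_1/(8s_i)}\,z_{i,j}$) while you do it in one; the computations are identical. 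One small point worth making explicit: your justification ``$|t_{i,j}|/v_{i,j}\le v_{i,j}^{\eta-1}$'' only directly covers $i+j>2$; for $(i,j)=(1,1)$ you need the extra line $|t_{1,1}|\le\sum_{i+j>2}s_i|t_{i,j}|\ll n^{\eta}$ before the Taylor expansion applies uniformly.
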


\begin{proof}
Noting that
$$\exp \left(
c_1\sqrt{an}-c_2n^{2\eta -3/2}\right)
=O\left(n^{-(1-\eta)(\ell_1+\cdots +\ell_k)-2\eta} \exp \left(
c_1\sqrt{an} \right) \right),$$  by \eqref{eqg(n)}, Lemmas
\ref{lem6} and \ref{lem9}, we have
\begin{eqnarray}\label{eqg(n)1} g(n) &=& \frac{c_2^{\ell_1+\cdots
+\ell_k}+O(n^{\eta-1})}{\prod v_{i,j}}\underset{D_1}{\int \cdots
\int } \exp \Big( c_1
\sum_{i,j} \sqrt{x_{i,j}}\Big) d x_{1,2} \cdots d x_{k,\ell_k}\nonumber \\
&& +O\left(n^{-(1-\eta)(\ell_1+\cdots +\ell_k)-2\eta} \exp \left(
c_1\sqrt{an} \right) \right).
\end{eqnarray}

Recall that  $D_1$ is the set of all $\ell_1+\cdots +\ell_k -1$
tuples $(x_{1,2}, \dots , x_{k,\ell_k})$ of real numbers with
\begin{equation*}|x_{i,j}-v_{i,j}|\le
v_{i,j}^{\eta}\end{equation*} for all $i+j>2$, and
$$x_{1,1}=n-\sum_{i+j>2} s_i x_{i,j} .$$

Let $x_{i,j}=v_{i,j}+v_{i,j}y_{i,j}$ for all $i,j$. Then
\begin{eqnarray*}&&\underset{D_1}{\int \cdots \int } \exp \Big( c_1 \sum_{i,j}
\sqrt{x_{i,j}}\Big)  d x_{1,2} \cdots d x_{k,\ell_k}\\
&=&\frac{\prod v_{i,j}}{v_{1,1}} \underset{D_2}{\int \cdots \int }
\exp \Big( c_1 \sum_{i,j} \sqrt{v_{i,j}+v_{i,j}y_{i,j}}\Big)  d
y_{1,2} \cdots d y_{k,\ell_k},\end{eqnarray*} where $D_2$ is the
set of all $\ell_1+\cdots +\ell_k -1$ tuples $(y_{1,2}, \dots ,
y_{k,\ell_k})$ of real numbers with
\begin{equation}\label{eqn2} |y_{i,j}|\le
v_{i,j}^{-(1-\eta)}\end{equation} for all $i+j>2$, and
\begin{equation}\label{eqn3} s_1v_{1,1}y_{1,1}=-\sum_{i+j>2} s_i v_{i,j} y_{i,j} .\end{equation}
The last equality comes from $s_1=1$ and
$$\sum_{i,j} s_ix_{i,j} =n=\sum_{i,j} s_iv_{i,j}.$$
By \eqref{eqn2} and \eqref{eqn3}, we have
\begin{equation}\label{eqn4}
|y_{i,j}|\ll n^{-(1-\eta )}\end{equation} for all $i,j$. By the
definition of $v_{i,j}$, we have
$$s_iv_{i,j}=\frac {n}{s_ia}.$$
Thus, by \eqref{eqn3}, we have
$$\sum_{i,j}\frac {n}{s_ia} y_{i,j} =0.$$
That is,
$$\sum_{i,j}\frac {y_{i,j}}{s_i}  =0.$$
Hence
$$\sum_{i,j} \sqrt{v_{i,j}} y_{i,j} =\sum_{i,j} \sqrt{\frac{n}{s_i^2
a}}y_{i,j}= \sqrt{\frac{n}{ a}}\sum_{i,j}\frac {y_{i,j}}{s_i}
=0.$$ Thus
 \begin{eqnarray*}&&\sum_{i,j} \sqrt{v_{i,j}+v_{i,j}y_{i,j}}\\
 &=&\sum_{i,j} \sqrt{v_{i,j}}\left(1+\frac 12y_{i,j}-\frac
 18y_{i,j}^2 +O(y_{i,j}^3)\right)\\
 &=&\sum_{i,j} \sqrt{v_{i,j}}-\frac 18 \sum_{i,j}
 \sqrt{v_{i,j}}y_{i,j}^2 +O(n^{3\eta -5/2})\\
&=& \sum_{i,j} \sqrt{\frac{n}{s_i^2a}}-\frac 18 \sum_{i,j}
 \sqrt{v_{i,j}}y_{i,j}^2 +O(n^{3\eta -5/2})\\
&=&\sqrt{an}-\frac 18 \sum_{i,j}
 \sqrt{v_{i,j}}y_{i,j}^2 +O(n^{3\eta -5/2}).\end{eqnarray*} Therefore
 \begin{eqnarray*}&&\underset{D_2}{\int \cdots \int }
\exp \Big( c_1 \sum_{i,j} \sqrt{v_{i,j}+v_{i,j}y_{i,j}}\Big)  d
y_{1,2} \cdots d y_{k,\ell_k}\\
&=&\exp\left(O(n^{3\eta -5/2})\right) \exp
(c_1\sqrt{an})\underset{D_2}{\int \cdots \int } \exp \Big( -\frac
{c_1}8 \sum_{i,j}
 \sqrt{v_{i,j}}y_{i,j}^2\Big)  d
y_{1,2} \cdots d y_{k,\ell_k}\\
&=&\left(1+O\left(n^{3\eta -5/2}\right)\right)\exp
(c_1\sqrt{an})\underset{D_2}{\int \cdots \int } \exp \Big( -\frac
{c_1}8 \sum_{i,j}
 \sqrt{v_{i,j}}y_{i,j}^2\Big)  d
y_{1,2} \cdots d y_{k,\ell_k}.\end{eqnarray*} Let
$$ v_{i,j}^{1/4}y_{i,j}=z_{i,j}, \quad \sqrt{\frac{c_1}{8}} z_{i,j}
=\sqrt{s_i} w_{i,j}$$ for all $i,j$. Then
\begin{eqnarray*}&&
\underset{D_2}{\int \cdots \int } \exp \left( -\frac {c_1}8
\sum_{i,j}
 \sqrt{v_{i,j}}y_{i,j}^2\right)  d
y_{1,2} \cdots d y_{k,\ell_k}\\
&=&\left(\frac{v_{1,1}}{\prod
v_{i,j}}\right)^{1/4}\underset{D_3}{\int \cdots \int } \exp \left(
-\frac {c_1}8 \sum_{i,j} z_{i,j}^2\right)  d
z_{1,2} \cdots d z_{k,\ell_k}\\
&=&\left(\frac{v_{1,1}}{\prod v_{i,j}}\right)^{1/4}\left(\frac
{8}{c_1}\right)^{(\ell_1+\cdots +\ell_k
-1)/2}\sqrt{s_1}^{\ell_1-1}
\sqrt{s_2}^{\ell_2}\cdots \sqrt{s_k}^{\ell_k}\\
&&\cdot \underset{\Omega }{\int \cdots \int } \exp \left(
-\sum_{i,j} s_i w_{i,j}^2\right) d w_{1,2} \cdots d
w_{k,\ell_k},\end{eqnarray*} where $D_3$ is the set of all
$\ell_1+\cdots +\ell_k -1$ tuples $(z_{1,2}, \dots ,
z_{k,\ell_k})$ of real numbers with
\begin{equation*}\label{eqn5} |z_{i,j}|\le
v_{i,j}^{\eta -3/4}\end{equation*} for all $i+j>2$, and
\begin{equation*}\label{eqn6} s_1v_{1,1}^{3/4}z_{1,1}=-\sum_{i+j>2} s_i v_{i,j}^{3/4} z_{i,j} ,\end{equation*}
$\Omega$ is the set of all $\ell_1+\cdots +\ell_k -1$ tuples
$(w_{1,2}, \dots , w_{k,\ell_k})$ of real numbers with
\begin{equation*}\label{eqn9} |w_{i,j}|<\sqrt{\frac{c_1}{8s_i}}v_{i,j}^{\eta -3/4}
\end{equation*} for all $i+j>2$, and
\begin{equation}\label{eqn10} s_1^{3/2}v_{1,1}^{3/4}w_{1,1}=-\sum_{i+j>2} s_i^{3/2} v_{i,j}^{3/4} w_{i,j} .\end{equation}
Noting that $s_i^{3/2} v_{i,j}^{3/4}=(n/a)^{3/4}$, \eqref{eqn10}
is equivalent to
\begin{equation}\label{eqn11} w_{1,1}=-\sum_{i+j>2}w_{i,j} .\end{equation}
Hence
\begin{eqnarray*}&&\underset{D_1}{\int \cdots \int } \exp \left( c_1 \sum_{i,j}
\sqrt{x_{i,j}}\right)  d x_{1,2} \cdots d x_{k,\ell_k}\\
&=&\frac{\prod v_{i,j}}{v_{1,1}} \underset{D_2}{\int \cdots \int }
\exp \left( c_1 \sum_{i,j} \sqrt{v_{i,j}+v_{i,j}y_{i,j}}\right)  d
y_{1,2} \cdots d y_{k,\ell_k}\\
&=&\frac{\prod v_{i,j}}{v_{1,1}}\left(1+O\left(n^{3\eta
-5/2}\right)\right)\exp (c_1\sqrt{an}) \underset{D_2}{\int \cdots
\int } \exp \left( -\frac {c_1}8 \sum_{i,j}
\sqrt{v_{i,j}}y_{i,j}^2\right)  d
y_{1,2} \cdots d y_{k,\ell_k} \\
&=&\frac{\prod v_{i,j}}{v_{1,1}}\left(1+O\left(n^{3\eta
-5/2}\right)\right) \exp (c_1\sqrt{an}) \left(\frac{v_{1,1}}{\prod
v_{i,j}}\right)^{1/4}\left(\frac {8}{c_1}\right)^{(\ell_1+\cdots
+\ell_k
-1)/2}\\
&&\sqrt{s_1}^{\ell_1-1} \sqrt{s_2}^{\ell_2}\cdots
\sqrt{s_k}^{\ell_k}\cdot \underset{\Omega}{\int \cdots \int } \exp
\left( -\sum_{i,j} s_i w_{i,j}^2\right) d w_{1,2} \cdots d
w_{k,\ell_k} .\end{eqnarray*} It follows from \eqref{eqg(n)1}and
$\dfrac 34 <\eta<\dfrac 56$ that
\begin{eqnarray*} g(n) &=& \frac{c_2^{\ell_1+\cdots
+\ell_k}+O(n^{\eta-1})}{\prod v_{i,j}}\underset{D_1}{\int \cdots
\int } \exp \left( c_1
\sum_{i,j} \sqrt{x_{i,j}}\right) d x_{1,2} \cdots d x_{k,\ell_k}\nonumber \\
&& +O\left(n^{-(1-\eta)(\ell_1+\cdots +\ell_k)-2\eta} \exp \left(
c_1\sqrt{an} \right) \right)\nonumber\\
&=& \frac{c_2^{\ell_1+\cdots +\ell_k}+O(n^{\eta-1})}{\prod
v_{i,j}}\frac{\prod v_{i,j}}{v_{1,1}} \left(1+O\left(n^{3\eta
-5/2}\right)\right) \exp (c_1\sqrt{an})
\left(\frac{v_{1,1}}{\prod v_{i,j}}\right)^{1/4}\nonumber\\
&&\left(\frac {8}{c_1}\right)^{(\ell_1+\cdots +\ell_k
-1)/2}\sqrt{s_1}^{\ell_1-1} \sqrt{s_2}^{\ell_2}\cdots
\sqrt{s_k}^{\ell_k}\nonumber\\
&&\cdot \underset{\Omega}{\int \cdots \int } \exp \left(
-\sum_{i,j} s_i w_{i,j}^2\right) d w_{1,2} \cdots d
w_{k,\ell_k}\nonumber \\
&& +O\left(n^{-(1-\eta)(\ell_1+\cdots +\ell_k)-2\eta} \exp \left(
c_1\sqrt{an} \right) \right)\\
&=&\frac{c_2^{\ell_1+\cdots +\ell_k}+O(n^{3\eta
-5/2})}{v_{1,1}}\left(\frac {8}{c_1}\right)^{(\ell_1+\cdots
+\ell_k -1)/2} \left(\frac{v_{1,1}}{\prod
v_{i,j}}\right)^{1/4}\exp
(c_1\sqrt{an})\nonumber\\
&&\sqrt{s_1}^{\ell_1-1} \sqrt{s_2}^{\ell_2}\cdots
\sqrt{s_k}^{\ell_k}\cdot \underset{\Omega}{\int \cdots \int } \exp
\left( -\sum_{i,j} s_i w_{i,j}^2\right) d w_{1,2} \cdots d
w_{k,\ell_k}\nonumber \\
&& +O\left(n^{-(1-\eta)(\ell_1+\cdots +\ell_k)-2\eta} \exp \left(
c_1\sqrt{an} \right) \right).
\end{eqnarray*}
\end{proof}

Now we determine the value of integral in Lemma \ref{lem8}
$$\underset{\Omega}{\int \cdots \int } \exp \Big( -\sum_{i,j}
s_i w_{i,j}^2\Big) d w_{1,2} \cdots d w_{k,\ell_k}.$$ To do this,
we need the following general lemma. We believe it should appear
in somewhere.
\begin{lemma}\label{lem3} If $\mathbf{x}^TA\mathbf{x}$ is a positive definite quadratic form in $\mathbf{x}^T= (x_1, \dots , x_k)$ and
$U$ is a region in $\mathbb{R}^k$, then there is a linear
transformation $\mathbf{x}=C\mathbf{y}$ such that
$$\underset{U}{\int \cdots \int } e^{- \mathbf{x}^TA\mathbf{x}} d
\mathbf{x} =\frac 1{\sqrt{\det (A)}} \underset{U'}{\int \cdots
\int } e^{-y_1^2-\cdots -y_k^2} d \mathbf{y},$$ where $$U'=\{
\mathbf{y} : \mathbf{y} =C^{-1}\mathbf{x}, \mathbf{x}\in U \} .$$
\end{lemma}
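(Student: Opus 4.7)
The plan is to realize the change of variables by factoring the matrix $A$ and applying the standard Jacobian formula. Since $\mathbf{x}^T A \mathbf{x}$ is positive definite, I may assume without loss of generality that $A$ is symmetric (replace $A$ by $(A+A^T)/2$, which gives the same quadratic form). By the spectral theorem I can write $A = Q^T D Q$ where $Q$ is orthogonal and $D = \mathrm{diag}(\lambda_1,\dots,\lambda_k)$ with all $\lambda_i > 0$. Setting $B = D^{1/2} Q$, I obtain the factorization $A = B^T B$ with $B$ invertible and $\det(B)^2 = \det(A)$.

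Next I would introduce the linear transformation $\mathbf{y} = B\mathbf{x}$, that is, $\mathbf{x} = C\mathbf{y}$ with $C = B^{-1}$. The key algebraic identity is
\[
\mathbf{x}^T A \mathbf{x} = \mathbf{x}^T B^T B \mathbf{x} = (B\mathbf{x})^T(B\mathbf{x}) = y_1^2 + \cdots + y_k^2,
\]
so the quadratic form in the exponent is diagonalized into a standard sum of squares.

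Then I would apply the multivariate change of variables formula to the integral $\int_U e^{-\mathbf{x}^T A \mathbf{x}} d\mathbf{x}$. The Jacobian of the map $\mathbf{y} \mapsto \mathbf{x} = C\mathbf{y}$ is $|\det C| = 1/|\det B| = 1/\sqrt{\det A}$, so
\[
\int_U e^{-\mathbf{x}^T A \mathbf{x}} d\mathbf{x} = \int_{U'} e^{-(y_1^2+\cdots+y_k^2)} |\det C| \, d\mathbf{y} = \frac{1}{\sqrt{\det A}} \int_{U'} e^{-y_1^2-\cdots-y_k^2} d\mathbf{y},
\]
where $U' = \{\mathbf{y} : C\mathbf{y} \in U\} = \{\mathbf{y} = C^{-1}\mathbf{x} : \mathbf{x}\in U\}$, exactly as stated.

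There is no real obstacle here: the only conceptual point is the existence of the factorization $A = B^T B$, which is guaranteed by positive definiteness (equivalently one could invoke the Cholesky decomposition). Everything else is a direct application of the change-of-variables formula for multiple integrals, and the claim about $U'$ follows tautologically from the definition of $C$.
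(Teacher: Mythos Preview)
Your proof is correct and follows essentially the same approach as the paper: both choose a linear change of variables $\mathbf{x}=C\mathbf{y}$ with $C^TAC=I$ and then read off the Jacobian $|\det C|=1/\sqrt{\det A}$. The only difference is cosmetic---you construct $C$ explicitly via the spectral decomposition $A=Q^TDQ$ and $B=D^{1/2}Q$, whereas the paper simply asserts the existence of such a $C$ as a standard fact about positive definite forms.
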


\begin{proof} Let $\mathbf{x}=C\mathbf{y}$ be a linear
transformation such that $C^TAC=I$ is the unit matrix. Then
$$|\det (C)|=\frac 1{\sqrt{\det (A)}}.$$ Therefore
\begin{eqnarray*}\underset{U'}{\int \cdots \int } e^{-\mathbf{x}^TA\mathbf{x}} d
\mathbf{x}&=&|\det (C)|\underset{U'}{\int \cdots \int }
e^{-y_1^2-\cdots -y_k^2} d \mathbf{y}\\
&=&\frac 1{\sqrt{\det (A)}} \underset{U'}{\int \cdots \int
}e^{-y_1^2-\cdots -y_k^2} d \mathbf{y}.\end{eqnarray*}
\end{proof}

\begin{lemma}\label{lem4} Let $a_0, a_1, \dots , a_k$ be $k+1$ positive
real numbers. Then there is a linear transformation
$\mathbf{x}=C\mathbf{y}$ such that
\begin{eqnarray*}&&\underset{U}{\int \cdots \int
} \exp \left(-a_0 (x_1+\cdots +x_k)^2 -
a_1x_1^2-\cdots -a_kx_k^2 \right) d \mathbf{x} \\
&=&\left( a_0a_1\cdots a_k \sum_{i=0}^k \frac 1{a_i}
\right)^{-1/2}\underset{U'}{\int \cdots \int }e^{-y_1^2-\cdots
-y_k^2} d \mathbf{y},\end{eqnarray*}  where $$U'=\{ \mathbf{y} :
\mathbf{y} =C^{-1}\mathbf{x}, \mathbf{x}\in U \} .$$
\end{lemma}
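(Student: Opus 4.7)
The plan is to deduce the statement directly from Lemma~\ref{lem3} after identifying the symmetric matrix $A$ associated with the quadratic form and evaluating its determinant. Since $a_0,a_1,\dots,a_k>0$, the quadratic form $Q(\mathbf{x})=a_0(x_1+\cdots+x_k)^2+a_1x_1^2+\cdots+a_kx_k^2$ is positive definite (if $Q(\mathbf{x})=0$ then each summand must vanish, forcing $\mathbf{x}=\mathbf{0}$), so Lemma~\ref{lem3} supplies a linear change of variables $\mathbf{x}=C\mathbf{y}$ that reduces the integral to $(\det A)^{-1/2}$ times the Gaussian integral over $U'$. It therefore suffices to show that $\det(A)=a_0a_1\cdots a_k\sum_{i=0}^k 1/a_i$.

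First I would expand the cross term $a_0(x_1+\cdots+x_k)^2 = a_0\sum_i x_i^2 + 2a_0\sum_{i<j} x_ix_j$ to read off the matrix as $A=\operatorname{diag}(a_1,\dots,a_k)+a_0\mathbf{1}\mathbf{1}^{T}$, where $\mathbf{1}=(1,1,\dots,1)^{T}\in\mathbb{R}^k$; equivalently, $A_{ii}=a_0+a_i$ and $A_{ij}=a_0$ for $i\neq j$.

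The only genuine computation is then $\det(A)$. Because $A$ is the positive diagonal matrix $D=\operatorname{diag}(a_1,\dots,a_k)$ plus the rank-one perturbation $a_0\mathbf{1}\mathbf{1}^{T}$, the matrix determinant lemma $\det(D+uv^{T})=\det(D)(1+v^{T}D^{-1}u)$ gives
\[
\det(A)=a_1a_2\cdots a_k\left(1+a_0\sum_{i=1}^{k}\frac{1}{a_i}\right)=a_0a_1\cdots a_k\sum_{i=0}^{k}\frac{1}{a_i},
\]
after using $a_1\cdots a_k=(a_0a_1\cdots a_k)/a_0$ to absorb the standalone $1$ into the sum. (Alternatively one may proceed by induction on $k$, or by diagonalising $\mathbf{1}\mathbf{1}^{T}$, whose eigenvalues are $k$ and $0$ with multiplicity $k-1$, in its eigenbasis and multiplying the resulting diagonal entries.) Plugging this determinant into the conclusion of Lemma~\ref{lem3} immediately yields the claimed identity, with $C$ being the transformation supplied by the proof of Lemma~\ref{lem3}. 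I do not foresee any real obstacle: once the determinant is evaluated, the remainder is pure bookkeeping, and the matrix determinant lemma disposes of the determinant in one line.
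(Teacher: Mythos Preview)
Your proposal is correct and follows essentially the same route as the paper: verify positive definiteness, write down the matrix $A$ of the quadratic form, compute $\det(A)$, and invoke Lemma~\ref{lem3}. The only difference is cosmetic: the paper obtains $\det(A_k)$ via the recursion $\det(A_k)=a_k\det(A_{k-1})+a_0a_1\cdots a_{k-1}$ and induction on $k$, whereas you use the matrix determinant lemma for the rank-one update $D+a_0\mathbf{1}\mathbf{1}^{T}$; you even flag the inductive alternative yourself.
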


\begin{proof} It is clear that the quadratic form
$$a_0 (x_1+\cdots +x_k)^2 +a_1x_1^2+\cdots +a_kx_k^2$$ is positive definite. Its matrix is
$$A_k=\left(\begin{matrix}&a_0+a_1 &a_0 &a_0  &\cdots  &a_0\\
&a_0 &a_0+a_2  &a_0  &\cdots  &a_0\\
&\cdot  &\cdot &\cdot &\cdots &\cdot\\
&\cdot  &\cdot &\cdot &\cdots &\cdot\\
&\cdot  &\cdot &\cdot &\cdots &\cdot\\
&a_0 &a_0  &a_0  &\cdots &a_0+a_k
\end{matrix}\right).
$$
It is not difficult to see that $$ \det(A_k)=a_k
\det(A_{k-1})+a_0a_1\cdots a_{k-1}.$$ By induction on $k$, we have
$$\det (A_k) =a_0a_1\cdots a_k \sum_{i=0}^k \frac 1{a_i}.$$
Now Lemma \ref{lem4} follows from Lemma \ref{lem3}.
\end{proof}

\section{Proof of Theorem \ref{thm0} }
It follows from Lemma \ref{lem8} that
\begin{eqnarray*} g(n) &=&\frac{c_2^{\ell_1+\cdots
+\ell_k}+O(n^{3\eta -5/2})}{v_{1,1}}\left(\frac
{8}{c_1}\right)^{(\ell_1+\cdots +\ell_k -1)/2}
\left(\frac{v_{1,1}}{\prod v_{i,j}}\right)^{1/4}\exp
(c_1\sqrt{an})\nonumber\\
&&\sqrt{s_1}^{\ell_1-1} \sqrt{s_2}^{\ell_2}\cdots
\sqrt{s_k}^{\ell_k}\cdot \underset{\Omega}{\int \cdots \int } \exp
\Big( -\sum_{i,j} s_i w_{i,j}^2\Big) d w_{1,2} \cdots d
w_{k,\ell_k}\nonumber \\
&& +O\left(n^{-(1-\eta)(\ell_1+\cdots +\ell_k)-2\eta} \exp \left(
c_1\sqrt{an} \right) \right) \end{eqnarray*} Thus, we only need
estimate $\underset{\Omega}{\int \cdots \int } \exp \Big(
-\sum_{i,j} s_i w_{i,j}^2\Big) d w_{1,2} \cdots d w_{k,\ell_k}$.
By Lemma \ref{lem4}, we have
\begin{eqnarray*}&&\underset{\Omega}{\int \cdots \int } \exp \Big( -\sum_{i,j}
s_i w_{i,j}^2\Big) d w_{1,2} \cdots d w_{k,\ell_k}\\
&=&\left( s_1^{\ell_1} s_2^{\ell_2}\cdots s_k^{\ell_k}
\sum_{i=1}^k \frac{\ell_i}{s_i}
\right)^{-1/2}\underset{\Omega'}{\int \cdots \int }\exp \Big(
-\sum_{i+j>2} w_{i,j}'^2\Big) d w_{1,2} '\cdots d
w_{k,\ell_k}',\end{eqnarray*} where $\Omega'$ is the set of all
$\ell_1+\cdots +\ell_k -1$ tuples $(w_{1,2}', \dots ,
w_{k,\ell_k}')$ of real numbers with
\begin{equation*}\label{eqn9} |w_{i,j}'|<c_{i,j}'v_{i,j}^{\eta -3/4}
\end{equation*} for all $i+j>2$ and some positive constants $c_{i,j}'$. Noting that, for $x>1$, $$
\int_x^\infty e^{-t^2}dt=\int_x^\infty e^{-t^2+t-t}dt\le
e^{-x^2+x} \int_x^\infty e^{-t}dt = e^{-x^2}
$$ and the well known Gaussian integral (also known as the Euler
Poisson integral)
$$\int_{-\infty}^{+\infty} e^{-x^2} dx=\sqrt{\pi},$$  for $\eta>\dfrac
34$, we have\begin{eqnarray*}\int_{-c_{i,j}'v_{i,j}^{\eta
-3/4}}^{c_{i,j}'v_{i,j}^{\eta -3/4}} e^{-x^2}dx
&=&\int_{-c_{i,j}''n^{\eta -3/4}}^{c_{i,j}''n^{\eta -3/4}}e^{-x^2}dx\\
&=&\int_{-\infty}^\infty e^{-x^2}dx + O(e^{-(c_{i,j}''^2n^{2\eta-3/2})})\\
&=&\sqrt{\pi}+  O(e^{-(c_{i,j}''^2n^{2\eta
-3/2})}),\end{eqnarray*}where $c_{i,j}''$ are some positive
constants. Thus
\begin{eqnarray*}&&\underset{\Omega'}{\int \cdots \int }\exp \Big(
-\sum_{i+j>2} w_{i,j}'^2\Big) d w_{1,2} '\cdots d w_{k,\ell_k}'\\
&=&\left(\sqrt{\pi}+  O\left(e^{-(c_{1,2}''^2n^{2\eta
-3/2})}\right)\right)\cdots\left(\sqrt{\pi}+  O\left(e^{-(c_{k,\ell_k}''^2n^{2\eta-3/2})}\right)\right)\\
&=&\pi^{(\ell_1 +\cdots +\ell_k -1)/2}+O\left(e^{-(c
n^{2\eta-3/2})}\right),\end{eqnarray*}where $c$ is a positive
constant. Therefore,\begin{eqnarray*}
g(n)&=&\frac{c_2^{\ell_1+\cdots +\ell_k}+O(n^{3\eta
-5/2})}{v_{1,1}}\left(\frac {8}{c_1}\right)^{(\ell_1+\cdots
+\ell_k -1)/2} \left(\frac{v_{1,1}}{\prod
v_{i,j}}\right)^{1/4}\exp
(c_1\sqrt{an})\nonumber\\
&&\cdot \sqrt{s_1}^{\ell_1-1} \sqrt{s_2}^{\ell_2}\cdots
\sqrt{s_k}^{\ell_k}\cdot \left( s_1^{\ell_1} s_2^{\ell_2}\cdots
s_k^{\ell_k} \sum_{i=1}^k \frac{\ell_i}{s_i} \right)^{-1/2}\\
&& \cdot \big(\pi^{(\ell_1 +\cdots +\ell_k -1)/2}+O(e^{-(c
n^{2\eta-3/2})})\big)\nonumber
\\
&& +O\left(n^{-(1-\eta)(\ell_1+\cdots +\ell_k)-2\eta} \exp \left(
c_1\sqrt{an} \right) \right)\\
&=&\frac{c_2^{\ell_1+\cdots +\ell_k}\pi^{(\ell_1 +\cdots+\ell_k
-1)/2} +O(n^{3\eta -5/2})}{v_{1,1}}\left(\frac
{8}{c_1}\right)^{(\ell_1+\cdots +\ell_k -1)/2}
\left(\frac{v_{1,1}}{\prod v_{i,j}}\right)^{1/4}\nonumber\\
&&\exp (c_1\sqrt{an})\sqrt{s_1}^{\ell_1-1}
\sqrt{s_2}^{\ell_2}\cdots \sqrt{s_k}^{\ell_k}\cdot \left(
s_1^{\ell_1} s_2^{\ell_2}\cdots
s_k^{\ell_k} \sum_{i=1}^k \frac{\ell_i}{s_i} \right)^{-1/2}\nonumber \\
&& +O\left(n^{-(1-\eta)(\ell_1+\cdots +\ell_k)-2\eta} \exp \left(
c_1\sqrt{an} \right) \right)\\
&=&2^{-(3\ell_1+\cdots +3\ell_k+5)/4} 3^{-(\ell_1+\cdots
+\ell_k+1)/4} a^{(\ell_1+\cdots +\ell_k+1)/4} s_1^{\ell_1/2}\cdots
s_k^{\ell_k /2}\\
&& \cdot n^{-3/4 - (\ell_1+\cdots +\ell_k)/4} \exp
(c_1\sqrt{an})\\
&&+O\left(n^{3\eta-\frac 52-1-(\ell_1+\cdots +\ell_k-1)/4}\exp
\left(c_1\sqrt{an}
\right)\right)\\
&&+O\left(n^{-(1-\eta)(\ell_1+\cdots +\ell_k)-2\eta} \exp
\left(c_1\sqrt{an} \right) \right)\\
&=&c(\mathbf{s}, \mathbf{l}) n^{d(\mathbf{l} )}\exp
\left(c_1\sqrt{an} \right) \\
&&+O\left(n^{3\eta-\frac 72-(\ell_1+\cdots +\ell_k-1)/4}\exp
\left(c_1\sqrt{an}
\right)\right)\\
&&+O\left(n^{-(1-\eta)(\ell_1+\cdots +\ell_k)-2\eta} \exp
\left(c_1\sqrt{an} \right) \right).
\end{eqnarray*}
Noting that $\eta$ is a real number that subjects to $$\frac 34
<\eta <\min\left\{\frac 56,\ \frac 34 \frac{\ell_1+\cdots
+\ell_k-1}{\ell_1+\cdots +\ell_k-2}\right\},$$ we choose
$\eta=\dfrac 34+\varepsilon'$, where $\varepsilon'$ is
sufficiently small positive real number. Thus
\begin{eqnarray*} &&O(n^{3\eta-\frac 72-(\ell_1+\cdots
+\ell_k-1)/4}\exp \left(c_1\sqrt{an}
\right))+O\left(n^{-(1-\eta)(\ell_1+\cdots +\ell_k)-2\eta} \exp
\left(c_1\sqrt{an} \right) \right)\\
&=&O\left(n^{-1+\varepsilon-(\ell_1+\cdots +\ell_k)/4}\exp
\left(c_1\sqrt{an} \right)\right)\\
&=& O\left( n^{d(\mathbf{l} )-\frac 14+\varepsilon }
\exp\left(c_1\sqrt{an} \right)\right) .\end{eqnarray*} Noting that
$$\exp\left(c_1\sqrt{an} \right) =\exp \left(\pi \sqrt{\frac{2a
(\mathbf{s}, \mathbf{l}) n}3}\right),$$ we obtain a proof of
Theorem \ref{thm0}.

\section*{Acknowledgments}

This work was supported by the National Natural Science Foundation
of China, Grant No. 11371195 and a project funded by the Priority
Academic Program Development of Jiangsu Higher Education
Institutions.


\begin{thebibliography}{99}

\bibitem{Ahmed} Z. Ahmed, N. Baruah, M.G. Dastidar,  \emph{New congruences modulo $5$ for the number of $2$-color
partitions}, J. Number Theory 157 (2015), 184--98,  http://dx.doi.org/10.1016/j.jnt.2015.05.002.

\bibitem{Balu} R. Balasubramanian, F. Luca, On the number of  factorizations of an  integer, Integers 11 (2011), 139--143,
http://dx.doi.org/10.1515/integ.2011.012, A12, 5 pp.


\bibitem{Chan1} H.C. Chan, \emph{Ramanujan's cubic continued fraction and an analog of his ``most beautiful
 identity''}, Int. J. Number Theory 6 (2010), no. 3, 673--680, http://dx.doi.org/10.1142/S1793042110003150.

\bibitem{Chan and Cooper} H.C. Chan,  S. Cooper, \emph{Congruences modulo powers of 2 for a certain partition
function}, Ramanujan J. 22 (2010), 101--117. http://dx.doi.org/10.1007/s11139-009-9197-6.


\bibitem{Chen} S.C. Chen, \emph{Congruences for a certain partition
function}, Ann. Comb. 18 (2014), 607--615, http://dx.doi.org/10.1007/s00026-014-0240-y.

\bibitem{Chern} S. Chern, \emph{New congruences for $2$-color partitions},
J. Number Theory 163 (2016), 474--481, http://dx.doi.org/10.1016/j.jnt.2015.12.020.

\bibitem{ChenCR} Y.-G. Chen, Y.-L. Li, \emph{On the square-root partition function}, C. R. Math. Acad. Sci. Paris 353 (4)
(2015), 287--290, http://dx.doi.org/10.1016/j.crma.2015.01.013.


\bibitem{Erdos} P. Erd\H os, \emph{On an elementary proof of some asymptotic formulas in
the theory of partitions}, Ann. Math. 43 (1942), 437--450, http://dx.doi.org/10.2307/1968802.


\bibitem{Hardy1917} G.H. Hardy,  S. Ramanujan, \emph{Asymptotic formula for the distribution
of integers of various types}, Proc. London Math. Soc. (2) 16
(1917), 112--132.


\bibitem{Ingham} A.E. Ingham, \emph{A Tauberian theorem for partitions}, Ann. of Math. (2) 42 (1941), 1075--1090, http://dx.doi.org/10.2307/1970462.


\bibitem{Kim} B. Kim, \emph{An analog of crank for a certain kind of partition function arising from the cubic continued
fraction}, Acta Arith. 148 (2011), no. 1, 1--19, http://dx.doi.org/10.4064/aa148-1-1.


\bibitem{Lehmer} D.H. Lehmer, \emph{On the remainders and convergence of the series for
the partition function}, Trans. Amer. Math. Soc. 46 (1939),
362--373, http://dx.doi.org/10.2307/1989927.

\bibitem{ChenTaiwan} Y. L. Li and Y. G. Chen,
\emph{On the r-th Root Partition Function}, Taiwanese J. Math. 20 (2016),
545--551, http://dx.doi.org/10.11650/tjm.20.2016.6812.



\bibitem{Luca2016} F. Luca and D. Ralaivaosaona, \emph{An  explicit  bound  for  the  number  of  partitions
into roots}, J. Number Theory 169 (2016), 250-264,
http://dx.doi.org/10.1016/j.jnt.2016.05.017.


\bibitem{Odlyzko} A.M. Odlyzko, \emph{Asymptotic enumeration methods}, Handbook of combinatorics. 1, 2 (1995), 1063--1229.



\bibitem{Ramanujan} S. Ramanujan, \emph{Congruence properties of partitions},
Math. Z. 9 (1921), no. 1-2, 147--153, http://dx.doi.org/10.1007/BF01378341.


\bibitem{Sinick} J. Sinick, \emph{Ramanujan congruences for a class of eta
quotients}, Int. J. Number Theory 6 (2010),  835--847, http://dx.doi.org/10.1142/S1793042110003253.


\bibitem{Uspensky} Y.V. Uspensky  \emph{Asymptotic expressions of numerical functions
occurring in problems concerning the partition of numbers into
summands}, Bull Acad Sci de Russie 14 (1920), 199--218.





\end{thebibliography}
\end{document}